\documentclass[12pt]{article}
\usepackage{amsmath, amssymb, amsthm, hyperref}
\usepackage{tikz}
\usetikzlibrary{arrows.meta, positioning}
\usepackage{tikz}
\usetikzlibrary{decorations.pathmorphing,calc}
\usepackage{bbold}
\usepackage{bbold} 
\usepackage{graphicx} 
\usepackage[paper=a4paper,margin=2cm]{geometry}
\usepackage{xcolor}
\usepackage{longtable}
\usepackage{comment}

\title{The Parry Order of Perron Numbers}
\author{Kevin G Hare\footnote{Research of K. G. Hare is supported, in part, by NSERC Grant 2025-03965.}  and Hachem Hichri}
\date{}

\theoremstyle{plain}
\newtheorem{theorem}{Theorem}[section]
\newtheorem{lemma}[theorem]{Lemma}
\newtheorem{proposition}[theorem]{Proposition}
\newtheorem{example}[theorem]{Example}
\newtheorem{corollary}[theorem]{Corollary}
\newtheorem{conj}[theorem]{Conjecture}
\counterwithin{figure}{section}
\counterwithin{table}{section}

\theoremstyle{remark}
\newtheorem{remark}[theorem]{Remark}
\newcommand{\Ord}{\mathrm{Ord}}
\newcommand{\OrdP}{\mathrm{Ord}_{P}}
\newcommand{\SpecP}{\mathrm{Spec}_{P}}

\begin{document}

\maketitle

\begin{abstract}
We introduce the \emph{Parry order} $\OrdP(\beta)$, defined as the largest integer $n$ for which $\beta^n$ is a Parry number. This leads to a natural partition of the set of Perron numbers as follows: 
\[
 \mathbb{P} = \left(\bigcup_{n \ge 0} \mathcal{H}_n\right)\cup \mathcal{H}_\infty,
\]
where $\mathcal{H}_n$ is the class of Perron numbers with Parry order $n$,
and $\mathcal{H}_\infty = S \cup T$ consists exactly of all Pisot and Salem numbers.

We show that a Perron number has infinitely many Parry powers 
if and only if it is Pisot or Salem. For every other Perron number, only finitely many powers can be Parry. 
We give an explicit upper bound on $\OrdP(\beta)$ in terms of algebraic properties of $\beta$. 
We provide explicit examples of non-Parry Perron numbers whose powers become Parry, demonstrating that several $\mathcal{H}_n$ are non-empty and structurally rich.
We give an infinite family of cubic non-Pisot numbers, all of which have finite Parry order, but where the family has unbounded Parry order.

These results establish a new dynamical perspective on Perron numbers, connecting $\beta$-expansion theory with classical questions surrounding Salem numbers and Lehmer-type conjectures.
\end{abstract}

\section*{Introduction}

The study of algebraic integers through their dynamical and arithmetic properties has long revealed deep connections between number theory, Diophantine approximation, and ergodic theory. Among such numbers, \emph{Perron}, \emph{Pisot}, and \emph{Salem numbers} occupy a central place. These numbers, defined as real algebraic integers greater than one, and are distinguished by the location of their other Galois conjugates. 
In particular, 
\begin{itemize}
 \item A \emph{Perron number} is a real algebraic integer $\beta > 0$ strictly larger in modulus than its other conjugates.
 \item A \emph{Pisot number} is a real algebraic integer $\theta > 1$ whose conjugates are strictly inside the open unit disc.
 \item A \emph{Salem number} is a real algebraic integer $\tau > 1$ whose conjugates are in the closed unit disc, with at least one lying on the boundary of the unit circle.
\end{itemize}

We denote, respectively, by $\mathbb{P}, \mathcal{S}, \mathcal{T}$ the sets of Perron, Pisot, and Salem numbers. 
Obviously, we have $\mathcal{S} \cup \mathcal{T} \subset \mathbb{P}$.

Finally, we recall that, for any polynomial, 
\[
P(x) = a \prod_{i=1}^d (x-\alpha_i) \in \mathbb{C}[x],
\] 
the \emph{Mahler measure} of $P$ is defined as
\[
M(P) = |a| \prod_{i=1}^d \max \{1, |\alpha_i|\}=\exp\!\left(\int_0^1 \log\left|P\!\left(e^{2\pi i t}\right)\right|\,dt\right).
\]

In particular, if $\alpha$ is an algebraic integer, we define $M(\alpha) := M(P_\alpha)$, where $P_\alpha$ is the minimal polynomial of $\alpha$. It is clear that $M(\alpha) = \alpha$ if and only if $\alpha$ is a Pisot or a Salem number.

For background on Pisot and Salem numbers, we refer to Bertin \cite{Bertin1992}.

The study of these numbers is motivated by their connections with several problems in algebraic number theory, Diophantine approximation, distribution modulo $1$, ergodic theory, and Fourier analysis (see, e.g~\cite{SA07, GH99, Sal63, JS97, Sch80}).

In this paper we study the arithmetic nature of these numbers arises from the theory of $\beta$-expansions.  These were introduced by Rényi \cite{Ren57} and studied by Parry \cite{Par60}. For a real number $\beta > 1$, the $\beta$-expansion of $1$ encodes both the algebraic and dynamical behavior of $\beta$ through the orbit of $1$ under the $\beta$-transformation. 
The $\beta$-transformation $T_\beta$ is given by $T_\beta(x) = \beta x \mod 1$.
The greedy expansion is given by the sequence $d_1 d_2 d_3 \dots$ with the property that $x = \sum_{k=1}^\infty \frac{d_k}{\beta^k}$ and $d_k = \lfloor\beta\cdot T_\beta^{[k-1]}(x)\rfloor$. 
In particular, when this orbit $\{T^{[k]}(1)\}_{k=1}^\infty$ is finite, $\beta$ is called a \emph{Parry number}. We say $\beta$ is a \emph{simple Parry number} if the orbit is eventually $0$. The set of Parry numbers is denoted by $\mathcal{P}$, and its complementary set in $\mathbb{P}$ is denoted by $\mathbb{P}_a$.

Schmidt \cite{Sch80}, proved that all Pisot numbers are Parry numbers and claimed that some computational results led him to expect that similar result should be true for all Salem numbers---a statement now referred to as \emph{Schmidt's conjecture}.
Boyd showed that all degree 4 Salem numbers are Parry numbers.
In contrast to Schmidt's conjecture, Boyd then conjectured that there exist higher degree Salem numbers that are not Parry numbers.
See \cite{Boyd1989}.

The algebraic characterization of Parry numbers remains an open problem. The first important result in this direction is due to Parry.  Parry showed that all Parry numbers are Perron numbers with Galois conjugates of modulus $\leq 2$ \cite{Par60}. Solomyak improved this result by giving a complete description of the set $\Phi$ defined as the closure of the set of all conjugates of the Parry numbers (excluding the Parry numbers themselves) \cite{Sol94}. For our purposes, it will be sufficient to recall that the $\Phi$ contains the unit disk $\mathbb{D}$ and it is contained in the disk of radius the golden ratio $\theta_0=\frac{1+\sqrt{5}}{2}$. Consequently, any Perron number having another Galois conjugate of modulus $>\theta_0$ is a non-Parry number. This will be called {\em Solomyak's criteria}.

Let $\beta$ be a simple Parry number, and $a_1 \dots a_k$ be the finite greedy $\beta$-expansion of $1$.
Let $R(x)= x^k - a_1 x^{k-1} - \dots - a_k$.
We note that $R(\beta) = 0$ and hence is divisible by the minimal polynomial of $\theta$.
Let $a_1 a_2 \dots a_k (a_{k+1} \dots a_{k+\ell})^\omega$ be a periodic greedy expansions of $1$ of a Parry number $\theta$.
We define $R(x) = \left(x^{k+\ell} - a_1 x^{k+\ell} - \dots - a_{k+\ell}\right) -
    \left(x^k - a_1 x^{k-1} - \dots - a_k\right)$.
As before, we note that $R(\beta) = 0$ and is divisible by the minimal polynomial of $\theta$.
Notice that the tail coefficient of $R(x)$ is bounded by $\lfloor \theta \rfloor$ in absolute value.
Consequently, any Perron number which is strictly less than the absolute value of the tail coefficient of it's minimal polynomial is a non-Parry number.
This will be called {\em Boyd's criteria}.
Here, $R(x)$ is often called the {\em companion polynomial}.

This allows us to partition the set of Perron numbers into the following two subsets : $\mathbb{P}=\mathbb{P}_\Phi\cup \overline{\mathbb{P}}_\Phi$, where $\mathbb{P}_\Phi$ is the set of Perron numbers with all its Galois conjugates lying in the interior of $\Phi$ and $\overline{\mathbb{P}}_\Phi$ its complementary set. Note that this partition is different from $\mathbb{P}=\mathcal{P}\cup \mathbb{P}_a$, since it is already known that $\overline{\mathbb{P}}_\Phi\subsetneq \mathbb{P}_a$ and $\mathcal{P}\subsetneq\mathbb{P}_\Phi$. A major challenge to answer many open questions about Parry and non-Parry numbers is to give a complete characterization of the elements of $\mathbb{P}_\Phi\setminus \mathcal{P}$. It should be noted that, in general, it is not easy to confirm that a given Perron from $\mathbb{P}_\Phi$ is not a Parry number. To our knowledge, the only known infinite sequence of $\mathbb{P}_\Phi\setminus \mathcal{P}$ is given by Akiyama \cite{Akiyama2016}. 
This is discussed further in Section \ref{sec:ParryVsNonParry}.

From a result of Lind~\cite[Proposition~1]{Lind1984}, we know that the set of Perron numbers is closed
under both addition and multiplication. Consequently,  the set of Perron numbers is closed under the maps
$
\beta \longmapsto \beta^k $, for all positive integers $k$. However, these maps give rise to a rather mysterious dynamics  between the subsets of 
Parry  and non-Parry  Perron numbers, which together form a
partition of the set of Perron numbers.

The objective of our study is to understand this dynamics and to shed light on
the action of these maps on these two subclasses.
In particular, we are interested in the \emph{powers of Perron numbers}, focusing on how the Parry property behaves under exponentiation. Specifically, we are concerned with the following question: when powers of a given Perron number $\beta$ can be Parry or non-Parry number?

We found that this question connects the combinatorial structure of $\beta$-expansions with the arithmetic geometry of algebraic integers, and in particular, with classical conjectures such as those of Lehmer, Boyd, and Schmidt.

In Section \ref{sec:1}, our first result establishes the following precise criterion: {for a Perron number $\beta$, infinitely many powers $\beta^m$ are Parry numbers if and only if $\beta$ is a Pisot or a Salem number}. This characterization bridges a dynamical property of Perron numbers with the algebraic nature of Pisot and Salem numbers. Moreover, for non-Pisot and non-Salem numbers, we provide an explicit bound beyond which $\beta^n$ can no longer be a Parry number.  Such a bound is governed by the Mahler measure of $\beta$.

In Section \ref{sec:2}, we explore some density and structural aspects for these subclasses of Perron numbers by introducing the notion of the \emph{Parry order of $\beta$} and \emph{Parry spectrum of $\beta$}, denoted by $\OrdP (\beta)$ and $\SpecP(\beta)$ respectively.  This determines exactly how often the powers of a given Perron number $\beta$ remain a Parry number. This leads to a natural new partition of the set of Perron numbers according to their Parry order and provides a dynamical perspective to better understand the distribution of the subsets of Parry and non-Parry numbers.

In Section \ref{sec:3}, we develop a method to precisely compute the Parry order and Parry spectrum of some Perron numbers. This allows us in particular to provide several examples of Perron numbers that are non-Parry, yet admit certain powers that are Parry numbers. As far as we know, these are the first known examples satisfying this property. 
We give an analysis of cubic non-Pisot numbers, given conditions for when 
    these would be Parry.
Using this we construct Perron numbers with arbitrarily large
    Parry order.
We also study the roots of $x^n - x^{n-1}-1$ and $x^n - x- 1$.
We consider small degree examples to better understand how often Perron numbers are Parry numbers. 

In Section \ref{sec:4} some concluding remarks are made.

In this work, we discuss several conjectures motivated by our findings, including a new version of Lehmer’s problem and some density results related to Boyd's and Schmidt's conjecture's. Together, these results contribute to a unified understanding of the interplay between algebraic and dynamical properties of $\beta$-expansion theory.

\section{A dynamical version of strong Lehmer's conjecture}
\label{sec:1}

Parry showed in \cite{Par60}, that the set of simple Parry numbers is dense in $(1,\infty)$. Moreover, the first author and Smyth showed in \cite{HichriSmyth2024} that the set of non-Parry Perron numbers is also dense in $(1,\infty)$. In the same direction and as a consequence of Parry result, we start by showing the following result:
\begin{lemma} 
\label{parrydensity}
 The set of non-simple Parry numbers is dense in $(1,\infty)$. 
\end{lemma}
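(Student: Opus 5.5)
The idea is to use Parry's characterization of admissible sequences and perturb the expansion of $1$ for a simple Parry number. The target is a periodic, not eventually zero, expansion of $1$ whose base lies close to a given simple Parry number. Since Parry \cite{Par60} showed that simple Parry numbers are dense in $(1,\infty)$, it suffices to approximate every simple Parry number by non-simple Parry numbers.

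\textbf{Step 1: the perturbed sequence.} Let $\beta$ be simple Parry with greedy expansion $d_\beta(1)=a_1\dots a_k$, where $a_k\ge 1$. Its quasi-greedy expansion is $d^*_\beta(1)=(a_1\dots a_{k-1}(a_k-1))^\omega$. For large $N$ we consider
\[
s_N = a_1 a_2 \dots a_k\, 0^{N} \bigl(a_1\dots a_{k-1}(a_k-1)\bigr)^{\omega},
\]
or, more simply, $s_N = a_1\dots a_k (0^N 1)^\omega$. We apply Parry's theorem: a sequence $s=s_1s_2\dots$ is the greedy $\gamma$-expansion of $1$ for some $\gamma>1$ if and only if $\sigma^j(s)<_{lex} s$ for all $j\ge 1$. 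In that case $\gamma$ is the unique root greater than $1$ of $\sum_{i\ge1}s_i x^{-i}=1$.

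\textbf{Step 2: admissibility.} We check that $s_N=a_1\dots a_k(0^N1)^\omega$ satisfies the condition in Step 1. The shifts starting inside $a_2\dots a_k$ must be compared with $s_N$. The word $a_1\dots a_k$ satisfies $\sigma^j(a_1\dots a_k 0^\omega) < a_1\dots a_k0^\omega$, because it is the greedy expansion of $1$ for $\beta$. A suffix $a_{j+1}\dots a_k$ is therefore either strictly smaller than the corresponding prefix $a_1\dots a_{k-j}$, or equal to it. In the equal case the comparison continues with $0^N1\dots$ against $a_{k-j+1}\dots$. Here we need $a_1\ge 1$ and $N$ large, so that the zeros make the shift smaller. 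This uses $a_{k-j+1}\dots a_k\, 0\ldots$, which is positive at its start, or at least dominates $0^N1$ once $N\ge k$.

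Shifts starting in the periodic tail begin with $0$ or with $1 0^N$. The first case is smaller than $a_1\ge1$. In the second case, $1 0^N\ldots$ is at most $a_1 a_2\dots$. When $a_1=1$ we need care, since equality of the prefixes could occur if $a_2\dots$ are zeros. The comparison is then settled because $s_N$ has a $1$ at position $k\le N$, where the shift has a $0$. I expect this lexicographic bookkeeping, especially the boundary case $a_1=1$ with many zeros, to be the main obstacle. If it fails, I would fall back on $s_N = a_1\dots a_{k-1}(a_k-1)\,(a_1\dots a_k 0^N)^\omega$-type constructions that mimic the quasi-greedy expansion.

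\textbf{Step 3: Parry but not simple, and convergence.} Since $s_N$ is eventually periodic and not eventually $0$, the corresponding $\gamma_N$ is a Parry number that is not simple. Distinct greedy expansions give distinct bases, so $\gamma_N\ne\beta$. Finally, $f_N(x)=\sum s_{N,i}x^{-i}$ converges uniformly to $f(x)=\sum_{i\le k}a_ix^{-i}$ on compact subsets of $(1,\infty)$, with $|f_N-f|\le \sum_{m\ge1} x^{-k-mN-m}\to0$. Since $f$ is strictly decreasing with $f(\beta)=1$, we get $\gamma_N\to\beta$ (indeed $\gamma_N\downarrow\beta$). Density of simple Parry numbers then gives the lemma.
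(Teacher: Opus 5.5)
Your proof is correct and follows essentially the same route as the paper. Both arguments combine Parry's density of simple Parry numbers with Parry's admissibility criterion, applied to an eventually periodic, not eventually zero, sequence obtained by padding a finite expansion with long blocks of zeros. The difference is that you approximate a single simple Parry number $\beta$ by the bases of $a_1\dots a_k(0^N1)^\omega$ through the convergence $f_N\to f$, whereas the paper builds its sequence from the first disagreement between the expansions of consecutive approximants $\beta_n,\beta_{n+1}$ and places the new base between them. Your version is cleaner and avoids a degenerate case the paper's sketch leaves open: when the disagreement is at the first digit, the paper's periodic tail collapses to $0^\omega$ and its construction yields a simple Parry number.
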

\begin{proof}
 Let $r>1$ be a real number.  According to Parry, there exists an infinite sequence of simple Parry numbers, say $(\beta_n)$, converging to $r$. For a fixed integer $n \geq 1$, we denote respectively the beta expansion of $\beta_n$ and $\beta_{n+1}$ by:
 $d_{\beta_n}(1)=c_1c_2\ldots c_m$ and $d_{\beta_{n+1}}(1)=d_1d_2\ldots d_{\ell}$.

Next, we consider the first index $j$ such that $c_j \neq d_j$.
Assume that $c_j<d_j$, (the case $c_j > d_j$ is similar). 
For a sufficiently large positive integer $m$, the sequence $c_1\dots c_{j-1} d_j \underbrace{0\dots0}_{m} (c_1\dots c_{j-1} \underbrace{0\dots0}_m)^{\omega}$
gives a self-admissible sequence. So, it is the $\beta$-expansion of a real number, say $f_n$. Taking $m$ large, the corresponding
number $f_n$ is between $\beta_n$ and $\beta_{n+1}$ and it is a non-simple Parry number.
Moreover, it is clear that the sequence $(f_n)$ converges to $r$.
\end{proof}

From \cite{Par60} the set of simple Parry numbers is dense in $(1,\infty)$.
The set of Pisot numbers is a closed set \cite{Sal63} and 
a Salem number cannot be a simple Parry number \cite{Boyd1989}.
This shows that there is a dense set of Parry numbers in $(1,\infty)$ that are neither Salem nor Pisot numbers. Among the set of Perron numbers, Salem and Pisot numbers can be characterized by the following property:

\begin{proposition}\label{Mainprop}
 Let $\beta >1 $ be a Perron number. Then $\beta^n$ is a Parry number for infinitely many integers $n\geq 1$ if and only if $\beta $ is a Salem or Pisot number. 
\end{proposition}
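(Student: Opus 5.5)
The plan is to prove the two implications separately. The forward direction (only Pisot or Salem numbers can have infinitely many Parry powers) should follow directly from Solomyak's criteria. The converse needs a genuine $\beta$-expansion argument in the Salem case.

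\emph{Non-Pisot, non-Salem $\Rightarrow$ finitely many Parry powers.} Suppose $\beta$ is Perron but neither Pisot nor Salem. Then some conjugate $\gamma\neq\beta$ has $|\gamma|>1$, and by the Perron property $|\gamma|<\beta$.
\begin{enumerate}
\item For every $n\ge 1$, choose a Galois embedding $\sigma$ with $\sigma(\beta)=\gamma$. Then $\sigma(\beta^n)=\gamma^n$. This is a conjugate of $\beta^n$ even if $\deg \beta^n<\deg\beta$. Moreover $\gamma^n\neq\beta^n$, since $|\gamma|<\beta$.
\item By Lind's result, $\beta^n$ is again Perron.
\item As $|\gamma|>1$, we have $|\gamma|^n>\theta_0$ as soon as $n>\log\theta_0/\log|\gamma|$. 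For such $n$, Solomyak's criteria shows that $\beta^n$ is non-Parry.
\end{enumerate}
This gives the explicit bound $\OrdP(\beta)\le \log\theta_0/\log|\gamma_{\max}|$, where $\gamma_{\max}$ is the largest conjugate of modulus $>1$ other than $\beta$. This bound is the one the introduction alludes to in terms of $M(\beta)$.

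\emph{Pisot $\Rightarrow$ infinitely many Parry powers.} If $\beta$ is Pisot, every conjugate of $\beta^n$ other than $\beta^n$ has the form $\alpha^n$ with $|\alpha|<1$. So $\beta^n$ is Pisot for all $n$, and Schmidt's theorem \cite{Sch80} gives that every power is Parry.

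\emph{Salem $\Rightarrow$ infinitely many Parry powers.} This is the main obstacle, since we cannot assume Schmidt's conjecture. The conjugates of $\beta$ are $\beta^{-1}$ and pairs $e^{\pm 2\pi i\phi_j}$, $1\le j\le s$. The powers $\beta^n$ are again Salem, and I would not try to show they are all Parry. Instead, the plan is:
\begin{enumerate}
\item Use Dirichlet's simultaneous approximation theorem to get infinitely many $n$ with $\|n\phi_j\|<\varepsilon$ for all $j$. For such $n$, the unimodular conjugates of $\beta^n$ are within $O(\varepsilon)$ of $1$.
\item Run Schmidt's argument on the orbit $T^k(1)\in\mathbb{Z}[\beta^n]\cap[0,1)$. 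It suffices to show that all Galois images of the orbit stay bounded, since then the orbit is a bounded set of lattice points and hence finite. The conjugate at $\beta^{-n}$ is harmless.
\item For a unimodular conjugate $\alpha=e^{2\pi i n\phi_j}$, the image of $T^k(1)$ is
\[
\alpha^k\Bigl(1-\sum_{i\le k} d_i\alpha^{-i}\Bigr),
\]
with digits $d_i\le\lfloor\beta^n\rfloor$. The task is to bound these sums using that $\alpha$ is close to $1$ and that greedy digits are forced by the real value. I expect this to require a Boyd-type comparison with the degree-$4$ case, controlling the partial sums by the real embedding through the trace relation.
\end{enumerate}
If this direct bound fails, my fallback is to look for an existing result in the literature that a suitable power of any Salem number is Parry, and cite it.
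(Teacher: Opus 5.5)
Your forward direction and your Pisot case match the paper's argument. The conjugate $\gamma^n$ of $\beta^n$ eventually has modulus $>\theta_0$, so Solomyak's criterion excludes all large powers. Powers of Pisot numbers are Pisot, hence Parry. (A minor slip: the explicit bound $\log\theta_0/\log|\gamma|$ you extract is the $K(\beta)$ of Lemma \ref{lem:K}, not the Mahler-measure bound $\mathcal{N}_p(\beta)$ of Theorem \ref{Orbmaj}. This does not affect the proof.)

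The Salem direction, however, is not established. Your item 3 is not a reduction but a restatement of the goal. The orbit $T^k(1)$ lies in the lattice $\mathbb{Z}[\beta]$, and the real and $\beta^{-n}$ embeddings are automatically bounded. So uniform boundedness in $k$ of $\bigl|\alpha^k-\sum_{i\le k}d_i\alpha^{k-i}\bigr|$ at the unimodular conjugates is \emph{equivalent} to $\beta^n$ being Parry, i.e.\ to Schmidt's conjecture for the Salem number $\beta^n$.

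The Dirichlet step gives information only about $\alpha$. The difficulty lies in the unknown greedy digits, and nothing in the greedy condition forces the terms $d_i\alpha^{k-i}$ to cancel. If anything, proximity to $1$ works against you. Even for constant digits, the partial sums are only bounded by about $2\lfloor\beta^n\rfloor/|1-\alpha|$, which blows up as $\alpha\to 1$. At $\alpha=1$ the expression is $1-\sum_{i\le k}d_i$, which is unbounded because a Salem number is never a simple Parry number. Boyd's degree-$4$ argument is an explicit computation of the expansion with no known analogue in higher degree; Boyd even predicts non-Parry Salem numbers there. So it cannot be invoked as a black box.

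The paper does what your fallback describes: it cites \cite[Theorem 1]{aH22}, which directly gives infinitely many $n$ with $\beta^n$ Parry. If you take that route, a cited result guaranteeing just \emph{one} Parry power for every Salem number would already suffice. Indeed, $\beta^{N+1}$ is again Salem, so it yields a Parry exponent exceeding any given $N$. As submitted, though, your proposal names no such result, so the Salem implication remains unproved.
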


\begin{proof}
Let $\beta >1 $ be a Perron number such that $\beta^n$ is a Parry number for infinitely many integers $n\geq 1$. Assume that $\beta $ is neither a Salem number nor a Pisot number. Then $\beta$ has at least one Galois conjugate $\gamma$ other than $\beta $ satisfying $\lvert\gamma \rvert >1$. We denote by $N$ the normal closure of $\mathbb{Q}(\beta)$. As the automorphism group of $N/\mathbb{Q}$ is transitive on the conjugates of $\beta$, there is an automorphism $\sigma$ mapping $\beta$ to $\gamma$. So $\sigma$ maps $ \beta^n$ to $\gamma^n$, and consequently $\gamma^n$ will be a Galois conjugate of $\beta^n$ other than $\beta ^n$, for all $n\geq 1$. However, as $\lvert\gamma \rvert >1$, there exists an integer $n_0$ such that for all $n\geq n_0$ we have $\lvert\gamma^n\rvert > \frac{1+\sqrt{5}}{2}$. But this implies that $\beta^n $ can be a Parry number only for finitely many integers $n$. Hence, all the other conjugates of $\beta $ should be of modulus $\leq 1$. Thus, $\beta$ is either a Salem or Pisot number.

 Conversely, it is known that if $\beta$ is a Pisot number, then $\beta^n$ is also a Pisot number and hence a Parry number for all $n\geq 1$.

 Finally, let $\beta$ be a Salem number.
 By \cite[Theorem 1]{aH22} there are infinitely many integers $n$ such that $\beta^n$ is a Parry number. 
\end{proof}

\begin{remark}
We recall that Schmidt \cite{Sch80} gave a \textit{quasi}-dynamical characterization of Pisot and Salem numbers among all real numbers larger than 1 by showing that: 
If $\beta>1$ is a real number such that every $x \in \mathbb{Q}\cap[0,1)$ has a periodic $\beta$-expansion, then $\beta$ is either a Pisot number or a Salem number. Although Proposition \ref{Mainprop} provides a complete dynamical characterization of Pisot and Salem numbers, we should note that, in contrast to Schmidt's result, Proposition \ref{Mainprop} is no longer true if replace ``Perron number'' by real number or even algebraic integer. Indeed, if $\beta $ is a Pisot number, then it is clear that there exists an integer $m$ such that $\alpha =\sqrt[m]{\beta}$ is neither a Pisot nor a Salem number, although $\alpha^n$ is a Parry number for infinitely many integers $n$.
\end{remark}

 We denote by $\rho\approx 1.32$ the smallest Pisot number. Here, $\rho$ is often called the plastic ratio.
As an immediate consequence of Proposition \ref{Mainprop}, we have what follows:
\begin{corollary}{\ }
 \label{corr power}
 \begin{enumerate}
 \item Let $\beta \in (1,\rho)$ be a simple Parry number. Then $\beta^n$ is a non-Parry number for all sufficiently large integers $n$.  \label{prt:corr power 1}
 \item For any Perron number $\beta \in (1,\rho) $ we have: $\beta^n$ is a Parry number for infinitely many integers $n$ if and only if $\beta $ is a Salem number. \label{prt:corr power 2}
 \end{enumerate}
\end{corollary}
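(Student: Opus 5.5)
Both parts will follow from Proposition \ref{Mainprop} once we show that a number in $(1,\rho)$ cannot be a Pisot number. The key input is that $\rho$ is the smallest Pisot number, so $\mathcal{S}\cap(1,\rho)=\emptyset$. The plan is to intersect the dichotomy ``Pisot or Salem'' of Proposition \ref{Mainprop} with this interval.

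For part \ref{prt:corr power 2}, let $\beta\in(1,\rho)$ be Perron. By Proposition \ref{Mainprop}, $\beta^n$ is Parry for infinitely many $n$ if and only if $\beta\in\mathcal{S}\cup\mathcal{T}$. Since $\beta<\rho$, $\beta$ is not Pisot, so this happens if and only if $\beta$ is Salem. In the Salem direction, Proposition \ref{Mainprop} already gives infinitely many Parry powers, via \cite[Theorem 1]{aH22}.

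For part \ref{prt:corr power 1}, let $\beta\in(1,\rho)$ be a simple Parry number. Parry's result gives that $\beta$ is Perron, so Proposition \ref{Mainprop} applies. As noted just before the proposition, a Salem number cannot be a simple Parry number \cite{Boyd1989}, and $\beta$ is not Pisot because $\beta<\rho$. So $\beta\notin\mathcal{S}\cup\mathcal{T}$. Proposition \ref{Mainprop} then says only finitely many $\beta^n$ are Parry, so $\beta^n$ is non-Parry for all $n$ larger than the largest exceptional exponent.

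There is no real obstacle here; the argument is bookkeeping. The one point to be careful about in part \ref{prt:corr power 1} is that ruling out Salem numbers really uses simplicity of the Parry expansion, through Boyd's observation. Without it, a non-simple Parry number in $(1,\rho)$ could be Salem, and then it would have infinitely many Parry powers.
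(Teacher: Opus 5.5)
Your proposal is correct and follows the same approach as the paper. The paper also combines Proposition~\ref{Mainprop} with two facts: no Pisot number lies in $(1,\rho)$, and a Salem number cannot be a simple Parry number (Boyd). You are somewhat more complete than the paper's one-line proof, which only spells out part 1 and leaves implicit both part 2 and the fact that simple Parry numbers are Perron.
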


\begin{proof}
 As a Salem number cannot be a simple Parry number \cite{Boy89} and obviously $\beta $ is not a Pisot number, then $\beta^n $ can be a Parry number for only finitely many integers $n$. 
\end{proof}

\begin{remark}
 In 1933, Lehmer asked whether the Mahler measure of any nonzero noncyclotomic irreducible polynomial with integer coefficients is bounded below
by some constant $c > 1$. This is now commonly referred to as ``Lehmer’s
problem'' or (inaccurately) as ``Lehmer’s conjecture''. Up to now, the smallest known Salem number is $\tau_{10}\approx 1.176$ with minimal polynomial $L(x)=x^{10}+x^9-x^7-x^6-x^5-x^4-x^3+x+1$ discovered by D. H. Lehmer \cite{Leh33} The \emph{strong version} of Lehmer's conjecture states
that in fact \(c=\tau_{10}\). 

There are many variants and equivalent version of Lehmer's conjecture \cite{VergerGaugry2019}. Here, using Corollary \ref{corr power} part \ref{prt:corr power 2} we give a dynamical version of the strong version of Lehmer's conjecture:
\end{remark}
\begin{conj}
 If $\beta < c = \tau_{10}$ is a Perron number, then $\beta^n$ can be a Parry number for only finitely many integers $n$. 
\end{conj}

\section{Dynamical dispersion of Perron numbers}
\label{sec:2}

\subsection{Parry order of a Perron number}
According to Proposition \ref{Mainprop}, if $\beta$ is a Perron number which is neither a Pisot nor a Salem number, then the set $\{n\in \mathbb{N} ~~\mathrm{such~that} ~~ \beta^n \in \mathcal{P}\}$ is finite and hence the set is bounded. This allows us to define the Parry spectrum and Parry order of such a Perron number $\beta$ as follows:
\begin{align*}
\SpecP(\beta) & = \{n \in \mathbb{N} \text{ such that } \beta^n \in \mathcal{P}\} \\
\OrdP(\beta)& = \max\{n\in \mathbb{N} \text{ such that } \beta^n \in \mathcal{P}\}.
\end{align*}

If for all positive integers $n$, we have $\beta^n $ is a non-Parry number, we set $\SpecP(\beta) = \emptyset$ and $\OrdP(\beta)=0$. On the other hand, a natural extension of the notion of Parry order to a Pisot or Salem number $\beta$ is to set 
$\OrdP(\beta)=\infty$ which is consistent with the definition of the Parry order and with Proposition \ref{Mainprop}.
In the case where $\beta$ is a Pisot number, $\SpecP(\beta) = \mathbb{N}$.
The structure when $\beta$ is a Salem number is potentially more complicated.

 In the following theorem we show that, if $ \OrdP(\beta)$ is finite, then it is bounded above in terms of the degree and the Mahler measure of $\beta$.

 We first recall the following lemma which can be found \cite{HichriSmyth2024}: 
\begin{lemma}
 \label{lem:perronpowers}
 Given a Perron number $\alpha$, all its powers $\alpha^k$ $(k \ge 1)$ are Perron numbers having the same degree as $\alpha$.
\end{lemma}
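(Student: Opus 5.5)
The plan is to prove the two claims separately: that $\alpha^k$ is Perron, and that $\deg \alpha^k = \deg \alpha$.

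Let $\alpha=\alpha_1,\alpha_2,\dots,\alpha_d$ be the Galois conjugates of $\alpha$, listed as roots of its minimal polynomial $P_\alpha$, so that $d=\deg\alpha$ and $\alpha>|\alpha_i|$ for all $i\ge 2$. Fix $k\ge 1$.

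\emph{Perron property.} Since $\alpha$ is an algebraic integer, so is $\alpha^k$, and $\alpha^k>0$ is real. Every conjugate of $\alpha^k$ has the form $\sigma(\alpha)^k=\alpha_i^k$ for some embedding $\sigma$ of the normal closure, exactly as in the proof of Proposition \ref{Mainprop}. For $i\ge 2$ we have $|\alpha_i^k|=|\alpha_i|^k<\alpha^k$. Hence, once we know that no conjugate $\alpha_i^k$ with $i\ge2$ actually equals $\alpha^k$, the number $\alpha^k$ strictly dominates all its other conjugates and is Perron.

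\emph{Degree.} Let $Q(x)=\prod_{i=1}^d (x-\alpha_i^k)$. Its coefficients are symmetric functions of the $\alpha_i$, so $Q\in\mathbb{Z}[x]$, and $Q(\alpha^k)=0$. The minimal polynomial of $\alpha^k$ therefore divides $Q$. Moreover, by Galois theory $Q$ is a power of that minimal polynomial: the roots $\alpha_i^k$ are permuted transitively by the Galois group, and each distinct value appears with the same multiplicity. Thus $\deg \alpha^k = d$ if and only if the values $\alpha_i^k$ are pairwise distinct.

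\emph{Key step: multiplicity of $\alpha^k$.} It suffices to show that $\alpha^k$ occurs exactly once among the roots of $Q$. The multiplicity of $\alpha^k$ in $Q$ is the number of indices $i$ with $\alpha_i^k=\alpha^k$. For $i\ge2$ we have $|\alpha_i^k|<\alpha^k$, so $\alpha_i^k\neq\alpha^k$, and the multiplicity is exactly $1$. Since all distinct roots of $Q$ share a common multiplicity, that multiplicity is $1$. Hence $Q$ is squarefree and equals the minimal polynomial of $\alpha^k$. This gives $\deg\alpha^k=d$, and it also shows that the conjugates of $\alpha^k$ other than $\alpha^k$ itself are precisely the $\alpha_i^k$ with $i\ge2$, which completes the Perron argument above.

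The only delicate point is justifying the equal-multiplicity claim. I would argue it as follows. Let $m$ be the minimal polynomial of $\alpha^k$. The polynomial $Q$ is, up to sign, the characteristic polynomial of multiplication by $\alpha^k$ on $\mathbb{Q}(\alpha)$. Such a characteristic polynomial equals $m^{[\mathbb{Q}(\alpha):\mathbb{Q}(\alpha^k)]}$. Since $\alpha^k$ is a simple root of $Q$, the exponent is $1$. This also shows $\mathbb{Q}(\alpha^k)=\mathbb{Q}(\alpha)$.
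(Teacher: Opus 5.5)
Your proof is correct and complete. The paper gives no proof of this lemma; it only quotes it from the earlier work of Hichri and Smyth, so there is no in-paper argument to compare against.

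Your route is the standard one. Strict dominance, $|\alpha_i|^k<\alpha^k$ for $i\ge 2$, makes $\alpha^k$ a simple root of $Q(x)=\prod_{i=1}^d(x-\alpha_i^k)\in\mathbb{Z}[x]$. The identity $Q=m^{[\mathbb{Q}(\alpha):\mathbb{Q}(\alpha^k)]}$ then forces $Q=m$; your equal-fibre argument via transitivity of the Galois group does the same job. This gives $\deg\alpha^k=d$, and also $\mathbb{Q}(\alpha^k)=\mathbb{Q}(\alpha)$.

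One simplification: the Perron part does not need to wait for the degree computation. Any conjugate of $\alpha^k$ is $\alpha_i^k$ for some $i$. If it differs from $\alpha^k$ then $i\ge 2$, so its modulus is $|\alpha_i|^k<\alpha^k$. Your caveat ``once we know that no $\alpha_i^k$ with $i\ge 2$ equals $\alpha^k$'' is therefore settled by that same inequality.

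This easy half is also all that is needed where the paper invokes the lemma in Theorem~\ref{Orbmaj} and Lemma~\ref{lem:K}. There one only needs $\gamma^k$ to be a conjugate of $\beta^k$ distinct from $\beta^k$. The equality of degrees is the genuinely extra content, and your multiplicity argument handles it cleanly.
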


\begin{theorem}\label{Orbmaj}
 Let $\theta_0 = (1+\sqrt{5})/2$ be the golden ratio. If $\beta$ is a Perron number of degree $d$ which is neither a Pisot nor a Salem number, then
 $$
 \OrdP(\beta) \leq \frac{d \log (\theta_0)}{\log( M(\beta)) - \log(\beta)}:=\mathcal{N}_p(\beta)
 $$
 In particular, if $\mathcal{N}_p(\beta)<1$ then $\beta $ is a non-Parry number.
\end{theorem}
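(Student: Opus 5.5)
The plan is to combine Solomyak's criterion with a pigeonhole argument on the conjugates of $\beta$ of modulus larger than one. Let $n = \OrdP(\beta)$ and assume $n \ge 1$; if $n=0$ there is nothing to prove, since the right-hand side is positive. Then $\beta^n$ is a Parry number. By Lemma \ref{lem:perronpowers}, $\beta^n$ is a Perron number of degree $d$, so its conjugates are exactly $\gamma^n$ as $\gamma$ ranges over the conjugates of $\beta$, with no repetitions. By Solomyak's criterion, every conjugate $\gamma^n \neq \beta^n$ satisfies $|\gamma^n| \le \theta_0$. Hence $|\gamma| \le \theta_0^{1/n}$ for every conjugate $\gamma \neq \beta$.

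Next I bound the Mahler measure. Let $\gamma_2,\dots,\gamma_d$ be the other conjugates, and let $k$ be the number of them with $|\gamma_i|>1$. Then $k\ge 1$ because $\beta$ is neither Pisot nor Salem, and $k \le d-1 < d$. From the previous step,
\[
M(\beta) = \beta \prod_{|\gamma_i|>1} |\gamma_i| \le \beta\, \theta_0^{k/n} \le \beta\, \theta_0^{d/n}.
\]
Taking logarithms gives $\log M(\beta) - \log \beta \le d\log(\theta_0)/n$. The left-hand side is strictly positive: $M(\beta) > \beta$ exactly because some conjugate other than $\beta$ has modulus greater than one. So we can rearrange to $n \le d\log(\theta_0)/(\log M(\beta)-\log\beta) = \mathcal{N}_p(\beta)$.

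For the final assertion, suppose $\mathcal{N}_p(\beta)<1$. If $\beta$ were Parry, then $1 \in \SpecP(\beta)$, so $\OrdP(\beta)\ge 1 > \mathcal{N}_p(\beta)$, which contradicts the bound. Hence $\beta$ is non-Parry.

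The only delicate point is that Solomyak's criterion, as quoted in the paper, concerns conjugates of modulus strictly greater than $\theta_0$, so we obtain the non-strict inequality $|\gamma^n|\le\theta_0$. A non-strict inequality is all the argument needs. The other point to check is that the conjugates of $\beta^n$ really are the $n$-th powers of the conjugates of $\beta$, counted once each; Lemma \ref{lem:perronpowers} guarantees this, since the degree is preserved. One could sharpen the bound by replacing $d$ with $k\le d-1$, but the stated form follows directly.
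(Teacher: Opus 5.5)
Your proof is correct and is essentially the paper's argument run in contrapositive form, with the same two ingredients (Solomyak's criterion and Lemma \ref{lem:perronpowers}). The paper bounds $M(\beta)/\beta \le |\beta_2|^{d}$ via the largest other conjugate and shows $|\beta_2|^n > \theta_0$ whenever $n > \mathcal{N}_p(\beta)$; you instead assume $\beta^n$ is Parry and bound each factor of $M(\beta)/\beta$ by $\theta_0^{1/n}$, which handles the non-strict boundary case a little more cleanly.
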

\begin{proof}
Let $\beta = \beta_1, \beta_2, \dots, \beta_s$ be the conjugates of a Perron number $\beta$ such that $|\beta_2|\geq |\beta_3| \geq \ldots \geq |\beta_s| \geq 1$, then we have:
\[
M(\beta) = \prod_{i=1}^s |\beta_i|.
\]
Since $\beta$ is neither Pisot nor Salem, $2 \leq s \leq d$ and
\[
1 < \frac{M(\beta)}{\beta} = \prod_{i=2}^s |\beta_i| \leq |\beta_2|^{s-1} \leq |\beta_2|^d.
\]
which gives $1<(M(\beta)/\beta)^{1/d} < |\beta_2|$.
Therefore, for $n$ sufficiently large, one has
\[
\theta_0 < \left( \frac{M(\beta)}{\beta} \right)^{n/d} < |\beta_2|^n.
\]
However, according to Lemma \ref{lem:perronpowers}, we know that $\beta_2^n$ is a conjugate of $\beta^n$. Thus, as $|\beta_2^n| > \theta_0$, we get from Solomyak's criteria that $\beta^n$ is not a Parry number for all such integers $n$. From the above inequalities we easily get $\OrdP(\beta) \leq \frac{d \log (\theta_0)}{\log( M(\beta)) - \log(\beta)} $
\end{proof}

One could alternately use the size of the largest conjugate of $\beta$ to bound $\Ord_p(\beta)$.

\begin{lemma}
 \label{lem:K}
 Let $\theta_0 = (1+\sqrt{5})/2$ be the golden ratio. 
 Let $\beta$ be a Perron number that is neither a Pisot nor a Salem number.
 Let $\gamma $ be a Galois conjugate of $\beta$ of modulus $>1$ other than $\beta$. Then for all $k\geq K(\beta):=\left\lceil \frac{\log{\theta_0}}{\log|\gamma|}\right\rceil$ we have $\beta^k$ is not a Parry number.
\end{lemma}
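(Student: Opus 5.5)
The plan is to run the argument from the first half of the proof of Proposition \ref{Mainprop}, using the explicit threshold $K(\beta)$ in place of ``sufficiently large''. First, since $\beta$ is neither Pisot nor Salem, there is a conjugate $\gamma\neq\beta$ with $|\gamma|>1$, so $\log|\gamma|>0$ and $K(\beta)$ is a well-defined positive integer. Next I take an automorphism $\sigma$ of the normal closure of $\mathbb{Q}(\beta)$ with $\sigma(\beta)=\gamma$. Then $\sigma(\beta^k)=\gamma^k$, so $\gamma^k$ is a Galois conjugate of $\beta^k$.

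To see that $\gamma^k\neq\beta^k$, I use Lemma \ref{lem:perronpowers}. It says $\beta^k$ is Perron of the same degree $d$ as $\beta$, so $\beta^k$ has $d$ distinct conjugates $\sigma_i(\beta)^k$. Hence $\gamma^k$ is a conjugate different from $\beta^k$. This also follows directly from the Perron property, since $|\gamma|<\beta$ gives $|\gamma^k|<\beta^k$.

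For $k\ge K(\beta)\ge \log\theta_0/\log|\gamma|$ we get $k\log|\gamma|\ge\log\theta_0$, that is $|\gamma^k|\ge\theta_0$. Whenever the inequality is strict, Solomyak's criterion shows $\beta^k$ is not Parry. This covers every $k>\log\theta_0/\log|\gamma|$, and in particular all $k\ge K(\beta)$ unless $\log\theta_0/\log|\gamma|$ is an integer.

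The main obstacle is the boundary case $k=K(\beta)=\log\theta_0/\log|\gamma|$, where $|\gamma^k|=\theta_0$ exactly. Here the bare statement ``$\Phi$ lies in the disk of radius $\theta_0$'' is not enough. I would appeal to the finer form of Solomyak's description of $\Phi$: conjugates of Parry numbers have modulus at most $\theta_0$, with equality only for the point $-\theta_0$.

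So suppose, for a contradiction, that $\beta^k$ is Parry and $\gamma^k=-\theta_0$. Then $\beta^k$ is a conjugate of $-\theta_0$, whose minimal polynomial is $x^2+x-1$. The only other root of that polynomial is $1/\theta_0<1$, which contradicts $\beta^k>1$. Hence $\beta^k$ is not Parry in this case either, and the statement holds for all $k\ge K(\beta)$.

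If that precise form of Solomyak's result is not available, the fallback is to prove the statement with $\lceil\cdot\rceil$ replaced by $\lfloor\cdot\rfloor+1$. This coincides with $K(\beta)$ except in the boundary case.
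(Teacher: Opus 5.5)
Your proposal is correct and follows the paper's proof: $\gamma^k$ is a conjugate of $\beta^k$ other than $\beta^k$ itself, $|\gamma^k|\ge\theta_0$ for $k\ge K(\beta)$, and Solomyak's result then applies. Your separate treatment of the equality case $|\gamma|^k=\theta_0$ goes further than the paper. You use the finer form of Solomyak's theorem, that $-\theta_0$ is the only point of $\Phi$ of modulus $\theta_0$, and then rule out $\gamma^k=-\theta_0$ via $x^2+x-1$. This closes a small gap the paper leaves open, since its proof concludes from $|\gamma|^k\ge\theta_0$ while the version of Solomyak's criterion it states requires strict inequality.
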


\begin{proof} Note that the existence of a conjugate of $\gamma> 1$ other than $\beta $ is due to the fact that $\beta $ is a Perron number which is neither a Pisot nor a Salem number. 
 Take $K(\beta) := \left\lceil \frac{\log{\theta_0}}{\log|\gamma|}\right\rceil$ where $\theta_0= \frac{1+\sqrt{5}}{2}$ is the golden ratio. Then , for all $k\geq K(\beta)$ we have $|\gamma|^k\geq \theta_0$. By Lemma \ref{lem:perronpowers}, $\gamma^k$ is a Galois conjugate of $\beta^k$. Hence, $\beta^k $ is not a Parry number for all $k\geq K(\beta) $ by Solomyak criteria.
\end{proof}

\begin{remark}
 In fact, it is possible to replace the value of $K(\beta)$ with the constant $\mathcal{N}_p(\beta)$ given in Lemma \ref{Orbmaj}. However, as often $K(\beta) < \mathcal{N}_p(\beta)$, using $K$ will be significantly less computationally expensive. For the same reason, it will be more practical to choose $\gamma$ satisfying
\[
 |\gamma| = \max\{|\alpha| : \alpha \text{ is an other conjugate of } \beta \text{ with } |\alpha| > 1\}.
\]
\end{remark}

In the other direction, Akiyama gave a test that can prove if $\beta$ is non-Parry.
\begin{lemma}[Akiyama \cite{Akiyama2016}] \label{lem:Akiyama}
Let $\beta$ be a Perron number which is not neither Pisot nor Salem number. Then, there exists a conjugate 
$\gamma \neq \beta$ of $\beta$ with $|\gamma| > 1$. We denote by $\sigma$ the Galois action taking $\beta$ to $\gamma$ If there exists a $k \in \mathbb{N}$ such that 
\[
\big| \sigma(T_\beta^k(1)) \big| > \frac{\lfloor \beta \rfloor}{|\gamma| - 1},
\]
then $\beta$ is not a Parry number.
 \end{lemma}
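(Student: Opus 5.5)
We argue by contraposition: assuming $\beta$ is a Parry number, we show that every $k$ satisfies $|\sigma(T_\beta^k(1))| \le \lfloor \beta \rfloor/(|\gamma|-1)$. The existence of $\gamma$ is already established in the paper. Since $\beta$ is Perron but neither Pisot nor Salem, some conjugate other than $\beta$ has modulus $>1$. Because $\mathbb{Q}(\beta)$ need not be normal, we fix an embedding $\sigma:\mathbb{Q}(\beta)\to\mathbb{C}$ with $\sigma(\beta)=\gamma$; one could equally take an automorphism of the normal closure, as in the proof of Proposition \ref{Mainprop}. All quantities $T_\beta^k(1)$ lie in $\mathbb{Z}[\beta]\subset\mathbb{Q}(\beta)$, so $\sigma$ is defined on them.

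The key identity comes from the definition of the greedy expansion. Writing $d_j=\lfloor \beta\, T_\beta^{j-1}(1)\rfloor$, we have $T_\beta^{j}(1)=\beta T_\beta^{j-1}(1)-d_j$. Iterating from $T^0_\beta(1)=1$, or equivalently reading off the tail of the expansion, gives for every $k$ and every $N\ge 1$
\[
T_\beta^k(1)=\sum_{j=1}^{N}\frac{d_{k+j}}{\beta^{j}}+\frac{T_\beta^{k+N}(1)}{\beta^N}.
\]
This is a polynomial identity in $\beta$ with integer coefficients, so applying $\sigma$ yields
\[
\sigma(T_\beta^k(1))=\sum_{j=1}^{N}\frac{d_{k+j}}{\gamma^{j}}+\frac{\sigma(T_\beta^{k+N}(1))}{\gamma^N}.
\]

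Now suppose $\beta$ is Parry. Then the orbit $\{T_\beta^m(1)\}$ is a finite set, so its image under $\sigma$ is a finite set and is bounded, say by $C$. Since $|\gamma|>1$, the remainder term is at most $C/|\gamma|^N$ in modulus, which tends to $0$ as $N\to\infty$. The digits satisfy $0\le d_j\le\lfloor\beta\rfloor$. Letting $N\to\infty$ therefore gives
\[
|\sigma(T_\beta^k(1))|\le \lfloor\beta\rfloor\sum_{j\ge1}|\gamma|^{-j}=\frac{\lfloor\beta\rfloor}{|\gamma|-1}
\]
for every $k$. This contradicts the hypothesis, so $\beta$ is not a Parry number.

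The only delicate step is justifying that the remainder vanishes. This is exactly where the Parry assumption enters: finiteness of the orbit makes the conjugated orbit bounded. The simple Parry case, where the orbit eventually reaches $0$, is included, since the orbit is then also finite.
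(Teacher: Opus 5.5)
Your proof is correct and complete. The paper gives no proof of this lemma of its own; it only cites Akiyama, so there is no internal argument to compare yours against.

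What you give is the standard argument for this criterion:
\begin{itemize}
\item unwind the greedy algorithm to get $T_\beta^k(1)=\sum_{j=1}^{N} d_{k+j}\beta^{-j}+\beta^{-N}T_\beta^{k+N}(1)$;
\item apply the embedding $\sigma$;
\item use finiteness of the orbit, which makes the conjugated orbit bounded, so the remainder term tends to $0$ because $|\gamma|>1$;
\item bound what is left by $\lfloor\beta\rfloor\sum_{j\ge1}|\gamma|^{-j}=\lfloor\beta\rfloor/(|\gamma|-1)$.
\end{itemize}

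Two minor remarks. First, the identity you conjugate involves $\beta^{-j}$, so it is a Laurent-polynomial identity rather than a polynomial one. Multiplying through by $\beta^N$ gives a genuine identity in $\mathbb{Z}[\beta]$, and since $\sigma$ is a field embedding with $\gamma\neq 0$ this changes nothing. Second, your argument shows slightly more than stated. The Perron, non-Pisot, non-Salem hypotheses are used only to produce $\gamma$. Boundedness of $\{\sigma(T_\beta^m(1))\}_m$ already forces the explicit bound for every $k$; finiteness of the orbit is needed only to secure that boundedness.
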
 

 \begin{remark}
    Note that for any Perron number $\beta$, we always have $\OrdP(\beta) \leq K(\beta)$ (often $\OrdP(\beta) < K(\beta)$).  This provides a bound $n_0$ such that $\beta^n$ is a non-Parry number for all integers $n\geq n_0$. We show in Section \ref{sec:3}, how one can determine exactly the Parry order for several examples of Perron numbers. However, in contrast to $\mathcal{N}_p(\beta)$, it seems computatioanl infeasible to determine the value of $\OrdP(\beta)$ for every Perron number $\beta$.  
    This is because there is no bound on the length of a periodic $\beta$-expansion.   See for example \cite{Stockton21}.
 Further, if the power is not Parry, then Theorem \ref{lem:Akiyama} is not known to be if and only if.  
\end{remark}
Next, we give some properties of the Parry order of Perron numbers: 

\begin{theorem}\label{Parryorder}
Let $\beta$ be a Perron number and $n$ be a non negative integer.
\begin{enumerate}
 \item If $\beta^n \in \mathcal{P}$ then $n\leq \OrdP(\beta) $. \label{part:Parryorder 1}
 \item If $ \OrdP(\beta)=n$ and $m$ divides $n$, then $\OrdP(\beta^m)=\frac{n}{m}$. In particular, $ \OrdP(\beta^n)=1$. \label{part:Parryorder 2}
 \item If $\OrdP(\beta)=n$, then for any positive integers $d, k$, if $k>\frac{n}{d}$ then $\OrdP(\beta^k)<d$. In particular, for all integers $k>\frac{n}{2}$, we have $\OrdP(\beta^k) \in \{0,1\}$
\end{enumerate}
\end{theorem}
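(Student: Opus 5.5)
The whole statement rests on one identity relating the Parry spectra of $\beta$ and $\beta^m$. For any positive integer $m$,
\[
\SpecP(\beta^m)=\{\,j\in\mathbb{N} : mj\in \SpecP(\beta)\,\},
\]
which holds simply because $(\beta^m)^j=\beta^{mj}$. Before using it I need $\OrdP(\beta^m)$ to be defined and finite.

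\textbf{Well-definedness.} By Lemma \ref{lem:perronpowers}, $\beta^m$ is a Perron number. If $\OrdP(\beta)=n$ is finite, then $\beta^m$ is neither Pisot nor Salem. Indeed, otherwise Proposition \ref{Mainprop} would give infinitely many $j$ with $\beta^{mj}$ Parry. That would make $\SpecP(\beta)$ infinite, contradicting finiteness of $\OrdP(\beta)$. Hence $\OrdP(\beta^m)=\max \SpecP(\beta^m)$, or $0$ when the spectrum is empty, by the conventions of Section \ref{sec:2}.

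\textbf{Part 1.} This is immediate from the definition of $\OrdP(\beta)$ as the maximum of $\SpecP(\beta)$, with $\OrdP(\beta)=\infty$ in the Pisot/Salem case. If $\beta^n\in\mathcal P$ then $n\in\SpecP(\beta)$, so $n\le\OrdP(\beta)$.

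\textbf{Part 2.} Suppose $\OrdP(\beta)=n$ and $m\mid n$. If $n\ge 1$, then $\beta^n=(\beta^m)^{n/m}$ is Parry, so $n/m\in\SpecP(\beta^m)$ and $\OrdP(\beta^m)\ge n/m$. Conversely, if $j\in\SpecP(\beta^m)$, then $mj\in\SpecP(\beta)$, so $mj\le n$ by Part 1, giving $j\le n/m$. Together these give $\OrdP(\beta^m)=n/m$; taking $m=n$ gives $\OrdP(\beta^n)=1$. The degenerate case $n=0$ must be treated separately. There $\SpecP(\beta)=\emptyset$, so by the identity $\SpecP(\beta^m)=\emptyset$ and $\OrdP(\beta^m)=0=0/m$.

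\textbf{Part 3.} Take $k>n/d$ and any $j\in\SpecP(\beta^k)$. Then $kj\in\SpecP(\beta)$, so $kj\le n$, hence $j\le n/k<d$. Thus every element of $\SpecP(\beta^k)$ is less than $d$, so $\OrdP(\beta^k)<d$; this also holds trivially when the spectrum is empty and the order is $0$. Setting $d=2$ gives the special case: for $k>n/2$ we get $\OrdP(\beta^k)\in\{0,1\}$.

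There is no real obstacle here. The only points needing care are the edge cases, namely $n=0$ and the requirement that $\beta^k$ not be Pisot or Salem so that its order is a finite maximum. Both are handled by Proposition \ref{Mainprop} and Lemma \ref{lem:perronpowers} as above.
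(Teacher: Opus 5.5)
Your proof is correct and follows the same definition-chasing route as the paper, built on $(\beta^m)^j=\beta^{mj}$ together with Part 1. It is in fact more complete. The paper's proof of Part 2 only shows $n/m\le\OrdP(\beta^m)$ and dismisses Part 3 as clear, whereas your identity $\SpecP(\beta^m)=\{j : mj\in\SpecP(\beta)\}$ also gives the reverse inequality $\OrdP(\beta^m)\le n/m$, proves Part 3, and handles the edge cases ($n=0$ and finiteness of $\OrdP(\beta^m)$).
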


\begin{remark}
    It is worth noting that the converse of Theorem \ref{Parryorder} Part \ref{part:Parryorder 1} is not, in general, true.
    To see this, let $\beta$ be the Perron number defined by the polynomial $P(x)=x^{14}-x^{13}-x^{12}-x^{8}+1$ we have: $\beta^3 \notin \mathcal{P} $ although $ 3< \OrdP(\beta) =30$.
\end{remark}

\begin{proof}[Proof of Theorem \ref{Parryorder}]{\ }
 \begin{enumerate}
 \item This follows immediately form the definition of $\OrdP(\beta)$. 
 \item As $\beta ^n=(\beta ^m)^{\frac{n}{m}} \in \mathcal{P}$ then, using part \ref{part:Parryorder 1}, we have $\frac{n}{m}\leq \OrdP(\beta^m)$. 
 \item Clear. 
 \end{enumerate}
\end{proof}

\subsection[New partition of the set P]{New partition of the set \(\mathbb{P}\) }
 
 For the remainder of this paper, we define for any non-negative integer $n$ the set $\mathcal{H}_n$ of all Perron numbers of Parry order $n$.  That is:
 $$\mathcal{H}_n=\{ \beta \in \mathbb{P} ~~ \mathrm{ such ~~ that } ~~\OrdP(\beta)=n \}$$
Moreover, according to Proposition \ref{Mainprop}, we know $\OrdP(\beta)=\infty$ if and only if $\beta$ is a Pisot or a Salem number. Therefore, it is reasonable to set $\mathcal{H}_\infty=\mathcal{S}\cup \mathcal{T}.$
Obviously, these families of subsets $(\mathcal{H}_n)$ are mutually disjoint and make a partition of the set of Perron numbers $\mathbb{P}$. In particular, we have: 

\begin{equation} \label{P-decomposition}
\mathbb{P} = \left(\bigcup_{n \geq 0} \mathcal{H}_n\right) \cup \mathcal{H}_\infty.
\end{equation}

The purpose of this subsection is to provide some properties of subclasses of Perron numbers with respect to the family of subsets $(\mathcal{H}_n)$.

First we give, in the same Figure \ref{fig:placeholder}, the three partitions of the set of Perron numbers $\mathbb{P}=\overline{\mathbb{P}}_\Phi\cup\mathbb{P}_\Phi=\mathbb{P}_a\cup \mathcal{P}=\mathcal{H}_\infty \cup (\bigcup_{k\geq 0}{\mathcal{H}_k})$. All relationships between the subsets in Figure \ref{fig:placeholder} are either evident, or can be easily proved, except for the two points below.
\begin{itemize}
 \item In Figure \ref{fig:placeholder}, we assume that $\mathcal{H}_n\cap \mathbb{P}_a$ and $\mathcal{H}_n\cap \mathcal{P} $ are non-empty for all integers $n\geq 2$. In fact, we find many numerical results that support this assumption. See Section \ref{sec:3}. However, it is not clear how to prove this rigorously. For example, the Perron number $\beta$ defined by the polynomial $P(x)=x^{13}-x^{12}-x^{11}-x^{10}+1$ has 
 \begin{align*}
     \SpecP(\beta) =\  & \{[1, 2, 3, 4, 5, 7, 8, 9, 10, 11, 14, 15, 17, 19, 21, 22
, 24, 31, 47]\}
 \end{align*}
This gives that $\beta^j$ satisfying various relations.
For example,  $\beta \in \mathcal{H}_{47} \cap \mathcal{P}$,
$\beta^2 \in \mathcal{H}_{12} \cap \mathcal{P}$,
$\beta^3 \in \mathcal{H}_{8} \cap \mathcal{P}$, 
$\beta^{4} \in \mathcal{H}_{6} \cap \mathbb{P}$,
$\beta^{5} \in \mathcal{H}_{3} \cap \mathbb{P}$, and 
$\beta^{6} \in \mathcal{H}_{4} \cap \mathbb{P}_a$.
\item According to Schmidt \cite{Sch80}, all Salem numbers should be Parry numbers and then $\mathcal{H}_\infty \subset\mathcal{P}$. The opposite is conjectured by Boyd \cite{Boyd1996}. When creating Figure \ref{fig:placeholder}, we adopt Boyd's model which predicts the existence of Salem numbers which are non-Parry numbers.
\end{itemize}

\begin{remark}
 Note that, according to Theorem \ref{Parryorder}, part \ref{part:Parryorder 2}, from any Perron number $\beta $ such that $\OrdP(\beta)= n $, then $\beta $ can be used to obtain at least $\tau(n)= \# \{ d \in \mathbb{N} \text{ such that } d \mid n \}$ Perron numbers belonging to one of the subsets $\mathcal{H}_j$ with $j \geq 1 $. However, the number of such Perron numbers is often strictly larger than $\tau(n)$. For example, considering again $x^{13}-x^{12}-x^{11}-x^{10}+1$ we have that various powers of $\beta$ belong to one of 8 possible $\mathcal{H}_j$. 
 We note that $\tau(\OrdP(\beta)) = \tau(47) = 2$. 
 \end{remark}

In the following theorem, we provide a useful relationship between the subsets of Figure \ref{fig:placeholder}: 
\begin{theorem}\label{H01}{\ }
We have $\mathcal{H}_1\subsetneq \mathcal{P} \subsetneq \mathbb{P}_\Phi$.
\end{theorem}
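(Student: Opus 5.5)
The statement has four parts: two inclusions and two strictness claims. The plan is to take them one at a time, using only the definitions and the facts recalled in the introduction.

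\emph{The inclusion $\mathcal{H}_1\subseteq\mathcal{P}$.} If $\OrdP(\beta)=1$, then by definition $1$ is the maximum of the set $\{n : \beta^n\in\mathcal{P}\}$. In particular $1$ lies in this set, so $\beta=\beta^1\in\mathcal{P}$. More generally, every $\beta\in\mathcal{H}_n$ with $1\le n<\infty$ has $\SpecP(\beta)\neq\emptyset$, but for $n=1$ the spectrum is exactly $\{1\}$, so $\beta$ itself is Parry.

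\emph{The inclusion $\mathcal{P}\subseteq\mathbb{P}_\Phi$.} This is the content of Solomyak's description recalled in the introduction. For a Parry number $\beta$, every other conjugate lies in $\Phi$ by the definition of $\Phi$ as the closure of the set of such conjugates. The only subtle point is that $\mathbb{P}_\Phi$ was defined using the \emph{interior} of $\Phi$. I would handle this by quoting the paper's own statement that $\mathcal{P}\subsetneq\mathbb{P}_\Phi$ is already known (Parry and Solomyak), rather than reproving Solomyak's boundary analysis.

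\emph{Strictness of $\mathcal{H}_1\subsetneq\mathcal{P}$.} It suffices to exhibit a Parry number whose Parry order is not $1$. The easiest choice is any Pisot number, for instance $\rho$ or the golden ratio. It is Parry by Schmidt, but it lies in $\mathcal{H}_\infty$, so it is not in $\mathcal{H}_1$. If one prefers an example with finite order, the paper's example $x^{13}-x^{12}-x^{11}-x^{10}+1$ gives $\beta\in\mathcal{H}_{47}\cap\mathcal{P}$. The Pisot example already suffices and requires no computation.

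\emph{Strictness of $\mathcal{P}\subsetneq\mathbb{P}_\Phi$.} We need a non-Parry Perron number all of whose conjugates lie in the interior of $\Phi$. This is the main obstacle, since certifying non-Parry-ness inside $\Phi$ is hard in general. I would invoke Akiyama's infinite family from \cite{Akiyama2016}, which the introduction describes as lying in $\mathbb{P}_\Phi\setminus\mathcal{P}$. Failing that, I would take a concrete Perron number whose other conjugates have modulus less than $1$ except for one, which has modulus in $(1,\theta_0)$; since $\mathbb{D}\subset\Phi$, the small conjugates then lie in the interior of $\Phi$. I would then verify non-Parry-ness through Lemma~\ref{lem:Akiyama}, by computing orbit points $T_\beta^k(1)$ until $|\sigma(T_\beta^k(1))|$ exceeds $\lfloor\beta\rfloor/(|\gamma|-1)$. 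Checking that the conjugate of modulus in $(1,\theta_0)$ genuinely lies in the interior of $\Phi$ may require Solomyak's explicit description, and this is where I expect most of the effort to lie.
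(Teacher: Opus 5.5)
Your proposal is correct. You handle the two inclusions as the paper does: $\mathcal{H}_1\subseteq\mathcal{P}$ follows from the definition of $\OrdP$, and $\mathcal{P}\subsetneq\mathbb{P}_\Phi$ is taken from the literature. For the strictness of $\mathcal{P}\subsetneq\mathbb{P}_\Phi$ the paper simply cites Solomyak. Your appeal to Akiyama's family is arguably the more precise source, since strictness needs an explicit non-Parry Perron number whose other conjugates lie inside $\Phi$. Your fallback construction is then unnecessary, and as phrased it would need care: $\Phi$ contains no real number larger than $1$, so modulus in $(1,\theta_0)$ alone does not put a conjugate in $\Phi$.

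The genuine difference is the witness for $\mathcal{H}_1\neq\mathcal{P}$. You take a Pisot number such as $\theta_0$, which is Parry with $\OrdP=\infty$ and so lies in $\mathcal{H}_\infty$ rather than $\mathcal{H}_1$. This is valid under the paper's conventions and needs no computation. The paper instead uses the Perron root $\beta$ of $x^{13}-x^{12}-x^{11}-x^{10}+1$, which is Parry with $\OrdP(\beta)=47$ by machine verification. That costs a computation but proves more: even among Perron numbers of finite Parry order, being Parry does not force order $1$, i.e.\ $\mathcal{P}\cap\mathcal{H}_n\neq\emptyset$ for some finite $n\ge 2$. This is the nontrivial point behind the paper's partition picture, and your witness does not address it. For the bare claim $\beta\notin\mathcal{H}_1$, checking that $\beta$ and $\beta^2$ are both Parry would already suffice.
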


\begin{figure} \label{figure 1}
 \centering
 \includegraphics[width=0.5\linewidth]{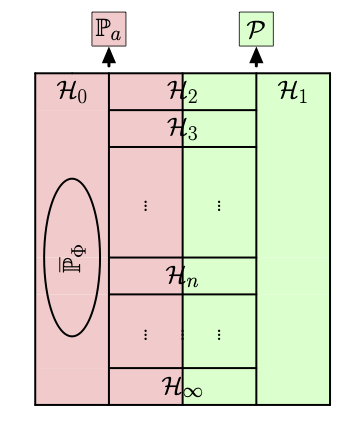}
 \caption{Partition of $\mathbb{P}=\overline{\mathbb{P}}_\Phi\cup\mathbb{P}_\Phi=\mathbb{P}_a\cup \mathcal{P}=\mathcal{H}_\infty \cup (\bigcup_{k\geq 0}{\mathcal{H}_k})$}
 \label{fig:placeholder}
\end{figure}

\begin{proof}
 By the definition of $\mathcal{H}_1$ and Solomyak's work \cite{Sol94}, it is clear that $\mathcal{H}_1\subset \mathcal{P} \subsetneq \mathbb{P}_\Phi$. To show that the first inclusion is strict, consider $x^{13}-x^{12}-x^{11}-x^{10}+1$ from before. We note that $\beta \in \mathcal{P}$ and $\beta \in \mathcal{H}_{47}$ and, therefore, $\beta \not\in \mathcal{H}_1$.
\end{proof}

\begin{remark}
  Among the families  $(\mathcal{H}_n)_{n\in \mathbb{N}}$, the only subsets closed under positive powers are $\mathcal{H}_0$ and $\mathcal{H}_\infty$. Indeed, according to the third point of Theorem \ref{Parryorder} ($d=1$), for any integers $k>n>0$, we have $\mathcal{H}_n^k \subset \mathcal{H}_0 $. 
\end{remark}

\begin{theorem}{\ }
 \begin{enumerate}
 \item $\mathcal{H}_0$ is dense in $(0,1)$.
 \item If there exists an integer $n\geq 1$ such that $\mathcal{H}_n $ is dense in $(1,\infty)$ then $\mathcal{H}_1 $ is also dense in $(1,\infty)$. \label{pnt:dense 2}
 \item If $\mathcal{H}_1$ is discrete (i.e without limit point) then $\mathcal{H}_n$ is also discrete for all integers $n\geq 1$.
\end{enumerate}
\end{theorem}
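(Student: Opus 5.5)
Part (1) says ``dense in $(0,1)$'', but Perron numbers exceed $1$. I read it as density in $(1,\infty)$, the setting of Lemma~\ref{parrydensity} and of \cite{HichriSmyth2024}.

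For part (1), I would start from the density result of \cite{HichriSmyth2024}, which gives a dense set of non-Parry Perron numbers. That alone is not enough, since a non-Parry Perron number could still have Parry powers. The plan is instead to produce, near any $r>1$, Perron numbers $\beta$ with $\overline{\mathbb{P}}_\Phi$-type behaviour that persists under powers. By Lemma~\ref{lem:K}, if $\beta$ has a conjugate $\gamma\neq\beta$ with $|\gamma|\ge\theta_0$, then $K(\beta)=1$ and $\beta^k\notin\mathcal{P}$ for all $k\ge1$, so $\beta\in\mathcal{H}_0$.

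So it suffices to show that Perron numbers having a second conjugate of modulus $\ge\theta_0$ are dense in $(1,\infty)$. I would try a perturbation construction. Take a Perron number $\alpha$ close to $r$ together with a large integer $N$, and consider a polynomial such as $x^{N}(x-a)\,Q(x) \pm 1$, where $Q$ has a root of modulus about $2$. Then use Rouché's theorem to control the dominant root near $r$ and a secondary root near modulus $2$, and check irreducibility or at least that the relevant roots are conjugate. This step, constructing the dense family and certifying both the Perron property and the big conjugate, is the main obstacle. As a first attempt I would adapt whatever explicit family \cite{HichriSmyth2024} used for non-Parry density, since Solomyak's criterion is presumably exactly their mechanism; their numbers then already have a conjugate outside the $\theta_0$-disk and hence lie in $\mathcal{H}_0$ directly.

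For part~\ref{pnt:dense 2}, suppose $\mathcal{H}_n$ is dense. By Theorem~\ref{Parryorder} part~\ref{part:Parryorder 2}, the map $\beta\mapsto\beta^n$ sends $\mathcal{H}_n$ into $\mathcal{H}_1$. The map $x\mapsto x^n$ is a homeomorphism of $(1,\infty)$ onto itself, so it sends dense sets to dense sets. Hence $\mathcal{H}_1\supseteq\mathcal{H}_n^{\,n}$ is dense.

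For part (3), I would argue by contraposition. Suppose $\mathcal{H}_n$ has a limit point $L$, so there are distinct $\beta_j\in\mathcal{H}_n$ converging to $L$. Then the numbers $\beta_j^n$ are distinct, since $x\mapsto x^n$ is injective on $(1,\infty)$. They lie in $\mathcal{H}_1$ by Theorem~\ref{Parryorder}, and they converge to $L^n$. So $\mathcal{H}_1$ has a limit point, contradicting its discreteness. Parts (2) and (3) are thus routine; all the difficulty sits in the construction for part (1).
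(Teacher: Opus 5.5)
\paragraph{Verdict.} Parts (2) and (3) are correct and follow the paper's argument: $\beta\mapsto\beta^n$ maps $\mathcal{H}_n$ into $\mathcal{H}_1$ by Theorem~\ref{Parryorder}, part~\ref{part:Parryorder 2}, and it is an injective homeomorphism of $(1,\infty)$. Part (1) has a genuine gap, because you reduce it to a statement that is false.

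\paragraph{Why the reduction in part (1) fails.} A Perron number strictly dominates its other conjugates. So if $\beta$ has a conjugate $\gamma\neq\beta$ with $|\gamma|\ge\theta_0$, then $\beta>\theta_0$. The Perron numbers you propose to show are dense all lie in $(\theta_0,\infty)$, so they cannot approximate any $r\in(1,\theta_0]$. The same obstruction hits your other two steps:
\begin{itemize}
\item Your test polynomial with a root of modulus about $2$ cannot have its dominant root near $r<2$.
\item The Hichri--Smyth approximants of an $r<\theta_0$ cannot have a conjugate outside the $\theta_0$-disc.
\end{itemize}
Solomyak's radius bound is the wrong certificate below $\theta_0$.

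Above $\theta_0$ your idea does work. Let $\beta_d$ be the Perron root of $x^d-x-1$ and choose $k$ with $\beta_d^k\to r$. A root $z$ of $x^d-x-1$ with $\arg z\approx 2\pi/d$ satisfies $|z|^d=|1+z|\to 2$, so $|z^k|\to r$. Hence for $r>\theta_0$ and $d$ large, Lemma~\ref{lem:K} gives $\beta_d^k\in\mathcal{H}_0$.

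\paragraph{What is missing for $r\in(1,\theta_0]$.} You need a non-Parry certificate that every power $\beta^k$ inherits and that is available for Perron numbers close to $1$.

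The paper uses conjugates lying outside $\Phi$, not outside the disc of radius $\theta_0$. In some directions $\Phi$ is much smaller than that disc; for instance it does not meet $(1,\infty)$. The paper cites \cite{HichriSmyth2024} for approximants in $\overline{\mathbb{P}}_\Phi$ and then asserts $\overline{\mathbb{P}}_\Phi\subset\mathcal{H}_0$. To make that route rigorous you must ensure $\gamma^k\notin\Phi$ for every $k\ge 1$, not only for $k=1$. Three workable choices:
\begin{itemize}
\item A real conjugate $\gamma\in(1,\beta)$. Then $\gamma^k\in(1,\beta^k)$, and the argument of Proposition~\ref{prop:cubic}, part~\ref{prt:cubic 3}, rules out every power.
\item A conjugate close enough to the positive real axis that $\gamma,\gamma^2,\dots$ stay off the closed set $\Phi$ until $|\gamma|^k>\theta_0$.
\item $|\mathrm{Norm}(\beta)|>\beta$. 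This is preserved under powers and gives $\mathcal{H}_0$ by Boyd's criterion.
\end{itemize}
The missing step is to produce such Perron numbers densely near every $r\in(1,\theta_0]$.
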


\begin{proof}{\ }
 \begin{enumerate}
 \item The first author and Smyth proved in \cite{HichriSmyth2024} that the set of non-Parry Perron numbers is dense in $(0,1)$. In the proof, they showed that any real number $\ell>1$ can be approached by an infinite sequence of Perron numbers $(\beta_n)$ where $\beta_n \in \overline{\mathbb{P}}_\Phi$, for all $n$ sufficiently large. Hence, we conclude by noting that $\overline{\mathbb{P}}_\Phi \subset \mathcal{H}_0$.
\item Let $\ell \geq 1 $ be a real number. As $\mathcal{H}_n$ is dense in $(1,\infty)$, there exists an infinite sequence $(\beta_k) \subset \mathcal{H}_n$ such that $\lim_{k\to\infty}(\beta_k)= \sqrt[n]{\ell} $. Hence, $\lim_{k\to\infty}(\beta_k)^n= \ell $. We conclude by noting that, using the second point of Theorem \ref{Parryorder} for $m=n$, we have $\beta_k^n\in \mathcal{H}_1$ for all integers $k \geq 0$.
 \item This proof is similar to part \ref{pnt:dense 2}.
 \end{enumerate}
\end{proof}
 
\begin{remark} {\ }
\begin{enumerate}
 \item We conjecture that $\mathcal{H}_1$ is dense.  This is based on a computational search for $\beta \in \mathcal{H}_1 \cap I_n$ with $\{I_n\}_n$ a sequence of small sub-intervals of $[1,2]$, combined with $\theta_d$ from Section \ref{sec:beta, theta} for various $d$.
 Based upon this search we found a large set $B = \{\beta_1 < \beta_n < \dots < \beta_{10887}\}$ with $\beta_i \in \mathcal{H}_1 \cap [1,2]$.
 We have $b_1 \approx 1.00490$, $\beta_{10997} \approx 1.99951$ and $\max \beta_{i+1}-\beta_i \approx 0.0026522$. 
 It is not clear how to prove that $\mathcal{H}_1$ is dense.
 \item It is useful to note that for every integer $n \geq 2$, $\mathcal{H}_n$ can be injected into $\mathcal{H}_1$ via the map: $\beta \to \beta^n$ according to part \ref{part:Parryorder 2} of Theorem \ref{Parryorder}. 
 \item For the other part, according to \eqref{P-decomposition} and \ref{H01}, we have $$\mathcal{P}= \mathcal{H}_1 \bigcup_{n \geq 2} (\mathcal{H}_n \cap \mathcal{P}) \bigcup (\mathcal{H}_\infty \cap \mathcal{P}).$$ Boyd conjectured in \cite{Boyd1977} that $\mathcal{H}_\infty =\mathcal{S}\cup \mathcal{T}$ is closed.  It is known that $\mathcal{P}$ is dense in $(1,\infty)$.  This means that we should have ${\bigcup_{n \geq 1} (\mathcal{H}_n \cap \mathcal{P}})$ dense in $(1,\infty)$. 
 \end{enumerate}
 \end{remark} 

\section{Computation of Parry order}
\label{sec:3}

\begin{example}
 \label{cor:Solomyak}
 Let $\beta$ be the Perron root of one of the polynomials in Table \ref{tab:perron}. 
 Then $\beta$ is not a Parry number, $\beta^2$ is a Parry number, and for all $k \geq 3$ we have $\beta^k$ is not a Parry number. 
 I.e $\SpecP(\beta) = \{2\}$ and $\beta \in \mathbb{P}_a \cap \mathcal{H}_2$.

 The techniques used for each of these examples is similar for all polynomials.
 We will provide the details only for the first polynomial, and sketch the key details for the remaining polynomials.

 Consider $\beta \approx 1.2528$, the root of $p(x) = z^{12}-z^{10}-z^9+z^4+z^3-z-1$.
 Let $\gamma \approx .4938 +.9056 i$ be the conjugate of $\beta$.

 One can check that taking $n = 82$ that $|\sigma(T^n_\beta(1))| > \frac{\lfloor \beta \rfloor}{|\gamma|-1}$.
 Hence $\beta$ is non-Parry by Lemma \ref{lem:Akiyama}.

 One can further check that 
 \[ d_{\beta^2}(1) = 1 0 1 (0 0 1 0 1 0 0 1 0 1 0 0)^{\omega}.\]
 Hence $\beta^2$ is a Parry number.

 We see that $K(\beta) = \left \lceil \frac{\log \theta_0}{\log |\gamma|}\right\rceil = 43$.
 By Lemma \ref{lem:K} we have $\beta^k$ is non-Parry for all $k \geq 43$. 
 For $k = 3 \dots 42$ we can explicitly check each case for find an $n$ such that $\left|\sigma(T^n_{\beta^k}(1))\right| > \frac{\lfloor \beta^k \rfloor}{|\gamma|^k -1}$.
 These are summarized in Table \ref{tab:K}.

 In Table \ref{tab:perron} we have indicated the Perron number $\beta$, its conjugate $\gamma$, and its minimal polynomial.
 We have provided the $\beta^2$ expansion of $1$, as well as the $K(\beta)$ as defined in Lemma \ref{lem:K} associated to proving $\beta$ is non-Parry. 
The details for the other four polynomials are similar.

 \begin{table}\begin{center}
 \begin{tabular}{|llp{4cm}lp{1.6cm}|} \hline
 $\beta$ & $\gamma$ & minimal poly & $d_{\beta^2}(1)$ & $K(\beta)$ from Lem. \ref{cor:Solomyak} \\
\hline
1.253 & .4938-.9056 i & $x^{12}-x^{10}-x^{9}+x^{4}+x^{3}-x-1$ & $1 0 1 (0 0 1 0 1 0 0 1 0 1 0 0)^{\omega}$ & 16\\ &&&& \\
1.353 & -.2561-1.008 i & $x^{12}-2x^{11}+2x^{10}-2x^{9}+x^{8}-x^{6}+x^{5}-x^{4}+x^{3}-x^{2}+x-1$ & $1 1 0 1 1 0 1 (0 0 1 0 0 0 0)^{\omega}$ & 13\\ &&&& \\
1.373 & .5457-.9013 i & $x^{11}-x^{8}-x^{7}-x^{6}-x^{5}-x^{4}+x^{2}+x+1$ & $1 1 1 0 0 1 1 0 1 1 0 0 (0 0 1)^{\omega}$ & 10\\ &&&&\\
1.408 & .7876-.6943 i & $x^{13}-2x^{12}+x^{11}-x^{7}+x^{6}-x^{5}+x^{4}-x^{3}+x-1$ & $1 1 1 1 1 0 1 1 1 0 (0 0 1 1)^{\omega}$ & 10\\ &&&&\\
1.414 & -.7811-.6589 i & $x^{14}-x^{11}-x^{10}-x^{9}-x^{8}-x^{7}-x^{6}+x^{3}+x^{2}+x+1$ & $1 1 1 1 1 1 1 1 1 1 1 0 1 0 0 0 1 (1 1 0 0)^{\omega}$ & 23\\ \hline
 \end{tabular}
 \caption{Example \ref{cor:Solomyak} where $\beta^k$ is Parry if and only if $k = 2$}\label{tab:perron}\end{center}
 \end{table}

\begin{table} 
 \begin{center} 
 \begin{tabular}{|ll|ll|ll|} 
 \hline 
 $k$ & $n$ & $k$ & $n$ &$k$ & $n$ \\ \hline 
1 & 82 & 7 & 7 & 13 & 3 \\ 
2 & NA & 8 & 6 & 14 & 3 \\ 
3 & 21 & 9 & 6 & 15 & 3 \\ 
4 & 11 & 10 & 4 & 16 & 2 \\ 
5 & 17 & 11 & 3 & &\\ 
6 & 6 & 12 & 2 & &\\ 
\hline
\end{tabular}
 \end{center} 
 \caption{Example \ref{cor:Solomyak} value of $n$ from Lemma \ref{lem:Akiyama} for $x^{12}-x^{10}-x^{9}+x^{4}+x^{3}-x-1$} \label{tab:K}
 \end{table} 
\end{example}

\begin{remark}
 Although, it may be well known that a Parry number could have infinitely many powers that are not Parry numbers and it is easy to provide several examples. As far as we know, no known example of non-Parry number whose powers include Parry numbers is already known. Here, Example \ref{cor:Solomyak} provides the first examples which we found to this problem.
\end{remark}

\subsection[Some comments on powers of beta and theta]{Some comments on $\beta_d^k$ and $\theta_d^k$} \label{sec:beta, theta}

The two polynomial families $x^d - x - 1$ and $x^d - x^{d-1} - 1$ are often considered in many works to show important results on the dynamic properties of Perron numbers \cite{Akiyama2016,HichriSmyth2024,VergerGaugry2008}. For instance, Akiyama proved that all Perron numbers $\beta_d$ of minimal polynomial $x^d - x - 1 = 0$, with $d \ge 4$ are non-Parry numbers belonging to $\mathbb{P}_\Phi$. In contrast, it is known, that the roots $\theta_d$ of $x^d - x^{d-1} - 1 = 0$ are always Parry numbers for $d\geq 2$ with $d_{\theta_d}(1)=1\underbrace{0\dots 0}_{d-2}1$. Moreover, the second author and Smyth in \cite{HichriSmyth2024}, used powers of $\beta_d$ from the first family to construct explicit non-Parry Perron numbers, proving that non-Parry Perron numbers are dense in $(1, \infty)$ and that their conjugates are dense in $\mathbb{C}$. Here we add some computational results to support the following: 
\begin{conj} {\ }
\label{conj:1}
For all $d \geq 4$ and all $k \geq 1$ we have $\beta_d^k$ is non-Parry.
\end{conj}

In Table \ref{tab:beta} we consider $\beta_d^k$, giving numerical support for the conjecture

\begin{table}
\begin{center}
\begin{tabular}{|lllp{110pt}l|} \hline
 $\beta$ &$\gamma$&   Minimal  & $d_{\beta^k}(1)$ & $K(\beta)$ from Cor. \ref{cor:Solomyak} \\
 & & polynomial & & \\
 \hline
 1.618 & NA & $x^{2}-x-1$ & Is Pisot & NA \\
1.325 & NA & $x^{3}-x-1$ & Is Pisot & NA \\ 
1.221 & -.2481-1.034i & $x^{4}-x-1$ & Non-Parry for $k \geq 1$ &8\\
1.167 & .1812-1.084i & $x^{5}-x-1$ & Non-Parry for $k \geq 1$ &6\\
1.135 & .4511-1.002i & $x^{6}-x-1$ & Non-Parry for $k \geq 1$ &6\\
1.113 & .6171-.9009i & $x^{7}-x-1$ & Non-Parry for $k \geq 1$ &6\\
1.097 & .7232-.8071i & $x^{8}-x-1$ & Non-Parry for $k \geq 1$ &6\\
1.085 & .7941-.7263i & $x^{9}-x-1$ & Non-Parry for $k \geq 1$ &7\\
1.076 & .8431-.6579i & $x^{10}-x-1$ & Non-Parry for $k \geq 1$ &8\\ \hline
\end{tabular}
\caption{Root $\beta_d$ of $x^d-x-1$.}
\label{tab:beta}
\end{center}
\end{table}

\begin{remark}
 It is worth remarking that this has been verified up to $d = 100$. 
\end{remark}

Similarly in Table \ref{tab:theta} we consider $\theta_d^k$.
We note that $x^5-x^4-1$ is reducible, with non-trivial factor $x^3-x-1$.
This is in fact true for all $d \equiv 5 \pmod 6$, as 
 $x^2-x+1 | x^{6k+5} - x^{6k+4} - 1$.
For all tests we use the non-cyclotomic factor of $x^{d}-x^{d-1}-1$.

We make the conjecture:
\begin{conj}
 \label{conj:3}
 For all $d \geq 6$ we have $\theta_d$ is Parry and for all $k \geq 2$ we have $\theta_d^k$ is non-Parry.
\end{conj}

\begin{remark}
 As before, we have verified this up to $d = 100$.
\end{remark}

\begin{table}
\begin{center}
\begin{tabular}{|lllp{110pt}l|} \hline
 $\beta$ & $\gamma$ & Minimal & $d_{\beta}(1)$ & $K(\beta)$ from Cor. \ref{cor:Solomyak} \\
 & & polynomial & & \\
 \hline
1.618 & NA & $x^{2}-x-1$ & Is Pisot & NA \\
1.466 & NA & $x^{3}-x^{2}-1$ & Is Pisot & NA \\
1.380 & NA& $x^{4}-x^{3}-1$ & Is Pisot & NA \\
1.325 & NA & $x^{3}-x-1$ & Is Pisot & NA \\
1.285 & .6714-.7849i & $x^{6}-x^{5}-1$ & $1 0 0 0 0 1 (0)^{\omega}$ \newline Non-Parry for $k \geq 2$ &15\\
1.255 & .7802-.7053i & $x^{7}-x^{6}-1$ & $1 0 0 0 0 0 1 (0)^{\omega}$ \newline Non-Parry for $k \geq 2$ &10\\
1.232 & .8522-.6353i & $x^{8}-x^{7}-1$ & $1 0 0 0 0 0 0 1 (0)^{\omega}$ \newline Non-Parry for $k \geq 2$ &8\\
1.213 & .9017-.5753i & $x^{9}-x^{8}-1$ & $1 0 0 0 0 0 0 0 1 (0)^{\omega}$ \newline Non-Parry for $k \geq 2$ &8\\
1.197 & .9368-.5243i & $x^{10}-x^{9}-1$ & $1 0 0 0 0 0 0 0 0 1 (0)^{\omega}$ \newline Non-Parry for $k \geq 2$ &7\\ \hline
\end{tabular}
\caption{Root $\theta_d$ of $x^d-x^{d-1}-1$.}
\label{tab:theta}
\end{center}
\end{table}


It is already known that a degree 2 Perron number is a Parry number if and only if it is Pisot  number. Consequently, if  $\beta$ is a degree 2 Perron number then  we have  one of  the two following cases:
\begin{enumerate}
    \item  $\OrdP(\beta) = \infty$ if $\beta$  is a Pisot number.
    \item  $\OrdP(\beta) = 0$ otherwise.
\end{enumerate}

However, as shows the following section, the case of degree 3 Perron number is much more complicate. 

\subsection{Cubic Perron numbers}

It is possible to give a partial description of cubic Perron numbers and their Parry order.  

\begin{proposition}
    Let $\beta$ be a degree 3 Perron number with two complex roots. 
    Then:
    \begin{enumerate}
        \item If $|\gamma| < 1$ then $\beta$ is Pisot and $\OrdP(\beta) = \infty$.
        \item If $|\gamma| > 1$ then $\OrdP(\beta) = 0$.
    \end{enumerate}
\end{proposition}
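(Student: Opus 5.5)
The plan is to argue through the norm of $\beta$. Let $\beta$ be a cubic Perron number with minimal polynomial $x^3+a_2x^2+a_1x+a_0\in\mathbb{Z}[x]$ and non-real conjugates $\gamma,\bar\gamma$. Then
\[
\beta|\gamma|^2=\beta\gamma\bar\gamma=|a_0|=:N,
\]
a positive integer, since $a_0\neq 0$ by irreducibility. Before treating the two cases, I would first rule out $|\gamma|=1$. If $|\gamma|=1$, then $\beta=N\in\mathbb{Z}$, which contradicts $\deg\beta=3$. This also reflects the fact that there are no cubic Salem numbers. So exactly one of the two cases in the statement occurs.

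\emph{Case $|\gamma|<1$.} Both conjugates of $\beta$ other than $\beta$ lie strictly inside the unit disc, so $\beta$ is Pisot by definition. Proposition \ref{Mainprop}, together with the convention $\OrdP(\beta)=\infty$ on $\mathcal{H}_\infty=\mathcal{S}\cup\mathcal{T}$, then gives $\OrdP(\beta)=\infty$. Indeed, every $\beta^n$ is Pisot and hence Parry.

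\emph{Case $|\gamma|>1$.} Here I must show that $\beta^n\notin\mathcal{P}$ for every $n\ge1$. I plan to use Boyd's criterion rather than Solomyak's criterion, since $|\gamma|^n$ need not exceed $\theta_0$ for small $n$. The steps are as follows.
\begin{enumerate}
\item By Lemma \ref{lem:perronpowers}, $\beta^n$ is a cubic Perron number. Its conjugates are $\beta^n,\gamma^n,\bar\gamma^n$, the images of $\beta^n$ under the Galois embeddings.
\item Consequently the constant term of the minimal polynomial of $\beta^n$ has absolute value
\[
\beta^n|\gamma|^{2n}=N^n.
\]
\item Since $|\gamma|>1$, we have $N=\beta|\gamma|^2>\beta$, and therefore $N^n>\beta^n$.
\item Thus $\beta^n$ is strictly smaller than the absolute value of the tail coefficient of its minimal polynomial. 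Boyd's criterion then shows that $\beta^n$ is not Parry.
\end{enumerate}
As this holds for every $n\ge1$, we obtain $\SpecP(\beta)=\emptyset$ and $\OrdP(\beta)=0$.

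The only delicate point is the exact form of Boyd's criterion being invoked. If needed, I would justify it as follows. The minimal polynomial $m$ of $\beta^n$ is monic in $\mathbb{Z}[x]$ with $m(0)\neq0$, and it divides the companion polynomial $R$. After removing any power of $x$ from $R$, the quotient is again a monic integer polynomial divisible by $m$, so $m(0)$ divides its constant term. That constant term is nonzero, equals some coefficient of $R$, and every coefficient of $R$ is bounded in absolute value by $\lfloor\beta^n\rfloor$. Hence $|m(0)|\le\lfloor\beta^n\rfloor\le\beta^n$, which contradicts $N^n>\beta^n$ whenever $\beta^n$ is Parry.
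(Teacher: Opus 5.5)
Your proof is correct and follows the paper's argument: $|\mathrm{Norm}(\beta^n)| = N^n > \beta^n$ for every $n$, so Boyd's criterion excludes every power. Your observation that $|\gamma|=1$ would force $\beta=N\in\mathbb{Z}$ also settles a case the paper leaves implicit. One minor slip in your optional justification of Boyd's criterion: for an eventually periodic expansion, not every coefficient of $R$ is bounded by $\lfloor\beta^n\rfloor$, since the coefficient of $x^k$ is $-(a_\ell+1)$. The constant term $a_k-a_{k+\ell}$ is bounded, however, and it is nonzero once the preperiod $k\ge 1$ is taken minimal, which is all your argument uses.
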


\begin{proof}
    The first part is obvious.

    Assume that $|\gamma| > 1$.
    Then $|\mathrm{Norm}(\beta)| > \beta$ and $\beta$ is 
    not Parry by Boyd's critieria.
    Further, $|\mathrm{Norm}(\beta^n)| > \beta^n$ for all $n$ and hence $\OrdP(\beta) = 0$.   
\end{proof}

\begin{proposition} \label{prop:cubic}
    Let $\beta$ be a degree $3$ totally real Perron number with conjugates $\alpha$ and $\gamma$. Assume $|\alpha| \leq |\gamma|$.  Then
    \begin{enumerate}
        \item If $|\alpha|, |\gamma| < 1$ then $\beta$ is a Pisot number and $\OrdP(\beta) = \infty$. \label{prt:cubic 1}
        \item If $-|\beta| < \gamma < -1 < \alpha < 1$ then $\beta$ {\em might} have non-trivial Parry Order.\label{prt:cubic 2}
        \item If $-1 < \alpha < 1 < \gamma < \beta$ then $\OrdP(\beta) = 0$.\label{prt:cubic 3}
        \item If $1 < |\alpha|, |\gamma| < \beta$ then $\OrdP(\beta) = 0$.\label{prt:cubic 4}
    \end{enumerate}
\end{proposition}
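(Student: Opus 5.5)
Parts \ref{prt:cubic 1} and \ref{prt:cubic 2} need little. For \ref{prt:cubic 1}: if both conjugates of $\beta$ lie in the open unit disc, then $\beta$ is Pisot by definition. Proposition \ref{Mainprop} then gives $\OrdP(\beta)=\infty$; more directly, every $\beta^n$ is Pisot and hence Parry by Schmidt. Part \ref{prt:cubic 2} asserts only a possibility, so nothing has to be proved. I would simply note that neither Solomyak's criteria nor Boyd's criteria nor the positivity argument below rules out Parry powers here, and point to the examples constructed later. The real work is in \ref{prt:cubic 3} and \ref{prt:cubic 4}. In both I would first use Lemma \ref{lem:perronpowers}: for every $n\ge 1$, $\beta^n$ is a cubic Perron number with conjugates $\alpha^n$ and $\gamma^n$, and these are distinct from $\beta^n$. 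So it suffices to exhibit an obstruction to being Parry that is stable under taking powers.

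For part \ref{prt:cubic 3} the obstruction is a real positive conjugate larger than $1$. I would first prove the following auxiliary fact: a Parry number $\theta$ has no conjugate $x\neq\theta$ that is real with $x>1$. Write $d_\theta(1)=d_1d_2\dots$, which is finite or eventually periodic. In the eventually periodic case, the companion polynomial $R$ from the introduction satisfies, for real $x>1$, that $R(x)=0$ if and only if $f(x):=\sum_{i\ge1} d_i x^{-i}=1$. This holds because summing the geometric tail of the periodic expansion turns $f(x)=1$ into $R(x)/(\text{positive factor})=0$. The finite case is the same with $R(x)=x^k-\sum a_ix^{k-i}$. Since the $d_i$ are non-negative and not all zero, $f$ is strictly decreasing on $(1,\infty)$, so $f(x)=1$ has exactly one solution there, namely $x=\theta$. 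The minimal polynomial of $\theta$ divides $R$, so any real conjugate $x>1$ of $\theta$ is a root of $R$ in $(1,\infty)$, and hence equals $\theta$. Now apply this with $\theta=\beta^n$. The number $\gamma^n$ is a real conjugate of $\beta^n$ with $1<\gamma^n<\beta^n$, so $\beta^n\notin\mathcal{P}$ for every $n\ge1$. Hence $\SpecP(\beta)=\emptyset$ and $\OrdP(\beta)=0$.

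For part \ref{prt:cubic 4} I would use Boyd's criteria, exactly as in the complex cubic case. The constant term of the minimal polynomial of $\beta^n$ is $\pm\mathrm{Norm}(\beta)^n$, and
\[
|\mathrm{Norm}(\beta)^n| = \beta^n|\alpha|^n|\gamma|^n > \beta^n \ge \lfloor \beta^n\rfloor .
\]
So $\beta^n$ is strictly smaller than the absolute value of the tail coefficient of its minimal polynomial, and is therefore non-Parry. This holds for all $n$, so $\OrdP(\beta)=0$.

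The step I expect to be most delicate is the justification of Boyd's criteria itself. There, one needs the constant term of the minimal polynomial to divide a nonzero constant term of $R$ of size at most $\lfloor\beta\rfloor$. In the periodic case the constant term $a_k-a_{k+\ell}$ of $R$ can vanish; then I would factor out the appropriate power of $x$ and compare with the first nonzero coefficient instead. Since the paper states the criterion earlier, I would invoke it as stated. As a backup for part \ref{prt:cubic 4}, whenever one of $\alpha,\gamma$ is positive the argument of part \ref{prt:cubic 3} applies directly.
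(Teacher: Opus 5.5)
Your proposal is correct and follows the paper's proof. Part 3 is the same positivity argument: a real conjugate $\gamma^n\in(1,\beta^n)$ would satisfy the same digit identity, contradicting strict monotonicity of $x\mapsto\sum a_i x^{-i}$. You make this rigorous through the companion polynomial rather than applying $\sigma$ directly to an infinite series. Part 4 is Boyd's criterion via $|\mathrm{Norm}(\beta)^n|>\beta^n$, as in the paper.

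The one difference is part 2. There the paper gives explicit examples showing both outcomes occur: $x^3-5x^2-2x+5$ with $\SpecP(\beta)=\{1,3,5\}$, and $x^3-5x^2-8x+5$ with $\OrdP(\beta)=0$ because $|\gamma|>\theta_0$. So your remark that Solomyak's criterion does not rule out Parry powers in this range holds only in the sense that it does not rule them out for every such $\beta$.
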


\begin{proof}
Part \ref{prt:cubic 1} is obvious.

To see Part \ref{prt:cubic 2}, we notice that $\beta_1$, the root of $x^3 - 5 x^2 - 2 x + 5$ and $\beta_2$, the root of $x^3 - 5 x^2 - 8 x + 5$ both satisfy $|\beta_i| < \gamma_i < -1 < \alpha_i < 1 < \beta_i$ for $i = 1, 2$.
We note that $d_{\beta_1}(1) = 5(1\ 0)^\omega$, and $\SpecP(\beta_1) = \{1, 3, 5\}$.
We note that $|\gamma_2| > \frac{1+\sqrt{5}}{2}$, and hence by Lemma \ref{lem:K} we gave $\OrdP(\beta_2) = 0$.

To see Part \ref{prt:cubic 3} we assume that $\beta^k$ has a Parry expansion.  
Let $d_{\beta^k}(1) = \sum a_i \beta^{-ki}$ be this finite or periodic expansion.
As the expansion is finite or periodic, we have $d_{\beta^k}(1) \in \mathbb{Q}(\beta)$.
By considering the Galois action $\sigma(\beta) = \gamma$ we see that 
$1 = \sum a_i \gamma^{-ki}$.
As $1 < \gamma < \beta$ we $1 = \sum a_i \beta^{-ki} < \sum a_i \gamma^{-ki} = 1$, which is a contradiction.

Part \ref{prt:cubic 4} follows from Boyd's criteria.
\end{proof}

\begin{corollary}
    If $\beta$ is totally real non-Pisot cubic Perron number, then either $\OrdP(\beta) = 0$ or $\OrdP(\beta)$ is odd.
\end{corollary}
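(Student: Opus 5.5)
The plan is to go through the cases of Proposition~\ref{prop:cubic} and show that the only case allowing a nonzero Parry order can only have odd Parry powers. First, an irreducible cubic cannot have $\pm 1$ as a root, so the conjugates $\alpha,\gamma$ of $\beta$ satisfy $|\alpha|\neq 1$ and $|\gamma|\neq 1$. Since $\beta$ is Perron we also have $|\alpha|,|\gamma|<\beta$. Hence, with $|\alpha|\le|\gamma|$, exactly one of the following holds:
\begin{itemize}
\item both conjugates lie in $(-1,1)$, which is excluded because $\beta$ is then Pisot;
\item $\gamma\in(1,\beta)$ and $|\alpha|<1$;
\item both conjugates have modulus in $(1,\beta)$;
\item $\gamma\in(-\beta,-1)$ and $|\alpha|<1$.
\end{itemize}
In the second and third cases, parts~\ref{prt:cubic 3} and~\ref{prt:cubic 4} of Proposition~\ref{prop:cubic} give $\OrdP(\beta)=0$. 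So only the last case remains.

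The key step is a slight generalization of the argument in part~\ref{prt:cubic 3}. Let $\eta$ be a Perron number with a real conjugate $\delta\neq\eta$ satisfying $1<\delta<\eta$. I claim $\eta$ is not Parry. Suppose $d_\eta(1)=a_1a_2\dots$ is finite or eventually periodic. Then the identity $1=\sum_i a_i\eta^{-i}$ can be rewritten as a polynomial (or rational-function) identity in $\eta$ with integer coefficients. Applying the embedding $\eta\mapsto\delta$ gives $1=\sum_i a_i\delta^{-i}$, and the series converges because $\delta>1$. The digits are nonnegative and not all zero, and $\delta^{-i}>\eta^{-i}$ for every $i$. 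Therefore
\[
1=\sum_i a_i\eta^{-i}<\sum_i a_i\delta^{-i}=1,
\]
a contradiction. This is just part~\ref{prt:cubic 3} stated for a general Perron number, and the care needed is only in justifying the passage to the conjugate for the periodic case, via the companion polynomial $R$.

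Now take the remaining case $-\beta<\gamma<-1$ and $|\alpha|<1$, and let $k$ be even. By Lemma~\ref{lem:perronpowers}, $\beta^k$ is a cubic Perron number whose conjugates are $\alpha^k$ and $\gamma^k$, and these are distinct from $\beta^k$ because the degree is preserved. Since $k$ is even, $\gamma^k$ is real and positive, and
\[
1<\gamma^k<\beta^k .
\]
By the generalized claim, $\beta^k$ is not Parry. So $\SpecP(\beta)$ contains only odd integers. If $\SpecP(\beta)$ is empty then $\OrdP(\beta)=0$. Otherwise $\OrdP(\beta)=\max\SpecP(\beta)$, which is finite by Proposition~\ref{Mainprop} and odd.

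I expect no serious obstacle. The only points requiring care are the rigorous transport of the periodic expansion to the conjugate, and excluding the boundary cases $|\alpha|=1$ and $|\gamma|=1$, which is handled by irreducibility.
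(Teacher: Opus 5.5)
Your proof is correct and follows the paper's route. You reduce via Proposition~\ref{prop:cubic} to the case $-\beta<\gamma<-1<\alpha<1$, then observe that for even $k$ the conjugate $\gamma^k$ of $\beta^k$ lies in $(1,\beta^k)$, so $\beta^k$ is non-Parry by the argument of part~\ref{prt:cubic 3}. The paper applies part~\ref{prt:cubic 3} directly to $\beta^{2k}$, which Lemma~\ref{lem:perronpowers} justifies, and your generalized claim is that same argument; your treatment of $|\alpha|,|\gamma|\neq 1$ and of the periodic case just fills in details the paper leaves implicit.
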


\begin{proof}
    By Proposition \ref{prop:cubic} we note that for $\beta$ to have non-trivial order we must have $-|\beta| < \gamma < -1 < \alpha < 1 < \beta$.
    We notice that $-|\beta^{2k+1}| < \gamma^{2k+1} < -1 < \alpha^{2k+1} < 1 < \beta^{2k+1}$ and $ -1 < \alpha^{2k} < 1 <  \gamma^{2k} < \beta^{2k}$.
    Hence if $\beta^m$ is Parry then $m$ is odd.
\end{proof}

\begin{theorem}[Parry, \cite{Par60}] \label{thm:parry}
    Let $\mathbf{a}=(a_i)_{i\geq 1}$ be a sequence in $\{0,1,\dots, M\}^{\mathbb{N}}$. That is, $\mathbf{a}$ is a non-empty finite or infinite sequence $a_1\dots a_k$ or $a_1a_2\dots$ with $a_i\in\{0,1,\dots,M\}$. Then the sequence $\mathbf{a}$ is the greedy expansion of $1$ for some $\beta>1$ if and only if for all $j\geq 1$
    $$\mathrm{shift}^j(\mathbf{a})<_{\text{lex}}\mathbf{a}.$$
    Here $<_{\text{lex}}$ is the lexigraphical order on strings, and  
    $\mathrm{shift}(a_1a_2\dots)=a_2a_3\dots$. 
\end{theorem}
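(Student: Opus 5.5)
The plan is to prove the two directions separately. Throughout, a finite sequence $a_1\dots a_k$ is identified with the infinite sequence $a_1\dots a_k0^\omega$, and $\mathrm{val}_\beta(\mathbf{b}) := \sum_{i\ge 1} b_i\beta^{-i}$. The basic tool is the greedy algorithm itself. If $d_\beta(1)=a_1a_2\dots$, then for every $j\ge 1$ the tail $a_{j+1}a_{j+2}\dots$ is precisely the greedy expansion of $T_\beta^{j}(1)\in[0,1)$. So both directions reduce to one comparison principle: among greedy expansions of numbers in $[0,1]$, lexicographic order agrees with numerical order.

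For necessity, assume $\mathbf{a}=d_\beta(1)$ and fix $j\ge1$. The sequence $\mathrm{shift}^j(\mathbf{a})$ is the greedy expansion of $x=T_\beta^j(1)<1$. I compare the greedy digits of $x$ and of $1$ position by position. At the first index where they differ, the greedy choice $\lfloor \beta T^{m-1}(\cdot)\rfloor$ is monotone in the current remainder, and the remainders before that index satisfy $T^{m}(x)<T^{m}(1)$ or are in the same relative order. This forces $\mathrm{shift}^j(\mathbf{a})\le_{\text{lex}}\mathbf{a}$. Equality is impossible, because two sequences with the same digits have the same value, while $x<1$. The delicate subcase is when $\mathbf{a}$ is finite. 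Then one must use that $T_\beta^{j}(1)$ is a genuine remainder, not the value $1$ obtained through a different representation; the convention of padding with $0^\omega$ handles this.

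For sufficiency, take $\mathbf{a}$ with $\mathrm{shift}^j(\mathbf{a})<_{\text{lex}}\mathbf{a}$ for all $j$. This forces $a_1\ge1$, and the degenerate case $\mathbf{a}=1$ is excluded since it would give $\beta=1$. The map $f(\beta)=\mathrm{val}_\beta(\mathbf{a})$ is continuous and strictly decreasing on $(1,\infty)$. It tends to $0$ at $\infty$ and exceeds $1$ near $1^+$ whenever $\mathbf{a}\ne 10^\omega$. Hence there is a unique $\beta>1$ with $f(\beta)=1$, and I take this as the candidate. I then show by induction on $j$ that the $j$-th greedy digit of $1$ is $a_j$. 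This amounts to proving
\[
0\le \mathrm{val}_\beta(\mathrm{shift}^j(\mathbf{a}))<1 \quad\text{for all } j\ge1 .
\]
Given that, at each step the digit $a_j$ is the largest integer keeping the remainder in $[0,1)$.

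The main obstacle is the key lemma: if a sequence $\mathbf{b}$ satisfies $\mathrm{shift}^k(\mathbf{b})<_{\text{lex}}\mathbf{a}$ for all $k\ge0$, then $\mathrm{val}_\beta(\mathbf{b})<1$. My approach is as follows. Let $s$ be the supremum of $\mathrm{val}_\beta(\mathbf{b})$ over all such $\mathbf{b}$; this set is shift-invariant. For such a $\mathbf{b}$, let $n$ be the first index with $b_n\ne a_n$, so $b_n\le a_n-1$. Then
\[
\mathrm{val}_\beta(\mathbf{b})\le \sum_{i<n}a_i\beta^{-i}+(a_n-1)\beta^{-n}+\beta^{-n}s
= 1-\beta^{-n}\bigl(1+\mathrm{val}_\beta(\mathrm{shift}^n\mathbf{a})-s\bigr).
\]
From this I would first derive $s\le1$. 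I would then upgrade to strict inequality for each individual $\mathbf{b}$: if $\mathrm{val}_\beta(\mathbf{b})=1$, equality would have to propagate along an infinite chain of first-difference indices, and I would rule this out using the strictness of the shift condition together with $\mathrm{val}_\beta(\mathrm{shift}^n\mathbf{a})\ge0$. If this sup argument gets stuck, my fallback is the standard truncation trick. Compare $\mathbf{b}$ with the periodic sequence $(a_1\dots a_{n-1}(a_n-1))^\omega$, whose value is at most $1$, and obtain strictness from the fact that $\mathbf{b}$ differs from it somewhere with a smaller digit.

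Applying the lemma to $\mathbf{b}=\mathrm{shift}^j(\mathbf{a})$ gives the required bound on the remainders, and this completes sufficiency.
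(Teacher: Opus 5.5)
The paper gives no proof of this statement; it only cites Parry. Your necessity argument is fine. In the sufficiency direction, choosing $\beta$ as the unique root of $\mathrm{val}_\beta(\mathbf{a})=1$ and reducing to $\mathrm{val}_\beta(\mathrm{shift}^j\mathbf{a})<1$ are also fine, and your sup argument does yield $s\le 1$.

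\textbf{The gap.} Your key lemma is false whenever $\mathbf{a}$ is finite. Take $\mathbf{a}=11$, so $\beta=(1+\sqrt5)/2$, and $\mathbf{b}=(10)^\omega$. Then $\mathbf{b}<_{\text{lex}}110^\omega$ and $\mathrm{shift}(\mathbf{b})=(01)^\omega<_{\text{lex}}110^\omega$, yet $\mathrm{val}_\beta(\mathbf{b})=\beta/(\beta^2-1)=1$. In general, let $\mathbf{a}=a_1\dots a_k$ with $a_k\ge1$ and put $\mathbf{w}=a_1\dots a_{k-1}(a_k-1)$. Then $\mathbf{w}^\omega$ satisfies your hypothesis and has value exactly $1$.

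This is precisely the infinite chain you intended to rule out. In
$1-\mathrm{val}_\beta(\mathbf{b})=\beta^{-n}\bigl((a_n-b_n)+\mathrm{val}_\beta(\mathrm{shift}^n\mathbf{a})-\mathrm{val}_\beta(\mathrm{shift}^n\mathbf{b})\bigr)$, strictness needs $\mathrm{val}_\beta(\mathrm{shift}^n\mathbf{a})>0$, not merely $\ge 0$. For $n\ge k$ that quantity is $0$. The chain with first-difference index always equal to $k$, digit $a_k-1$, and remainder of value $1$ is then perfectly consistent. So neither the strictness of the shift condition nor your fallback can give $\mathrm{val}_\beta(\mathbf{b})<1$ for every such $\mathbf{b}$. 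The fallback is also circular: a lexicographic comparison with $(a_1\dots a_{n-1}(a_n-1))^\omega$ only becomes a numerical comparison via the lemma you are proving.

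\textbf{The repair.}
\begin{itemize}
\item If $\mathbf{a}$ is not eventually $0$, then $\mathrm{val}_\beta(\mathrm{shift}^n\mathbf{a})>0$ for all $n$, and your argument gives strict inequality directly.
\item If $\mathbf{a}$ is finite, carry out the equality case. The identity above together with $s\le1$ shows that $\mathrm{val}_\beta(\mathbf{b})=1$ forces $b_n=a_n-1$, $\mathrm{val}_\beta(\mathrm{shift}^n\mathbf{a})=0$, and $\mathrm{val}_\beta(\mathrm{shift}^n\mathbf{b})=1$. The first two conditions force $n=k$. Iterating on $\mathrm{shift}^k\mathbf{b}$ gives $\mathbf{b}=\mathbf{w}^\omega$.
\item Now $\mathbf{w}^\omega$ is not eventually zero, since $\mathbf{a}\neq 10^\omega$, while every $\mathrm{shift}^j(\mathbf{a})$ is eventually zero. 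Hence $\mathrm{val}_\beta(\mathrm{shift}^j\mathbf{a})<1$ still follows.
\end{itemize}
Equivalently, you can run your lemma with $\mathbf{a}$ replaced by the quasi-greedy sequence $\mathbf{a}^*=\mathbf{w}^\omega$. You would first check that $\mathrm{val}_\beta(\mathbf{a}^*)=1$ and that $\mathrm{shift}^j(\mathbf{a})<_{\text{lex}}\mathbf{a}^*$ for all $j\ge1$.

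Finally, $\mathbf{a}=1$ is not excluded by the hypothesis: it satisfies the shift condition but corresponds to no $\beta>1$. You should flag it as a genuine exception to the statement as written, rather than as a case the hypothesis rules out.
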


\begin{proposition}
    Let $\beta$ be the Perron root of $x^3 - a x^2 - b x + c$ with $0 \leq c \leq a$, $1 \leq b \leq a -1$.
    Then $\beta$ is Parry.
\end{proposition}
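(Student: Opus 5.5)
The plan is to compute the greedy $\beta$-expansion of $1$ explicitly by iterating $T_\beta$. The aim is to show that the orbit of $1$ is eventually periodic with period two, so that $\beta$ is Parry directly from the definition. Parry's criterion (Theorem~\ref{thm:parry}) will then only serve as a consistency check that the digit string is self-admissible.

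\textbf{Step 1: locate $\beta$ and the degenerate cases.} Write $p(x)=x^3-ax^2-bx+c$.
\begin{itemize}
\item At $a$ we get $p(a)=c-ab\le a-a=0$, with equality only when $b=1$ and $c=a$.
\item At $a+1$ we get $p(a+1)=(a+1)^2-b(a+1)+c\ge (a+1)^2-(a-1)(a+1)>0$.
\end{itemize}
Hence, outside the degenerate case, the Perron root satisfies $a<\beta<a+1$, so $\lfloor\beta\rfloor=a$. Two degenerate cases are handled separately.
\begin{itemize}
\item If $b=1$ and $c=a$, then $p(x)=(x-a)(x^2-1)$. The Perron root is the integer $a$, with $d_a(1)=a$, so it is trivially (simple) Parry.
\item If $c=0$, then $p(x)=x(x^2-ax-b)$. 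The relevant root satisfies $1=a/\beta+b/\beta^2$ with $b<a$, and the same iteration as below gives the finite expansion $d_\beta(1)=a\,b$.
\end{itemize}
So assume from now on that $1\le c\le a$ and that $\beta\in(a,a+1)$ is a root of $p$.

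\textbf{Step 2: iterate $T_\beta$.} Dividing $p(\beta)=0$ by $\beta^3$ gives
\[
1=\frac a\beta+\frac b{\beta^2}-\frac c{\beta^3}.
\]
\begin{itemize}
\item First step: the digit is $a$, and $T_\beta(1)=\beta-a=\frac b\beta-\frac c{\beta^2}\in(0,1)$.
\item Second step: multiplying by $\beta$ gives $\beta T_\beta(1)=b-\frac c\beta$. Since $0<c\le a<\beta$, this lies in $(b-1,b)$. So the digit is $b-1\ge 0$ and $T^2_\beta(1)=1-\frac c\beta\in(0,1)$.
\item Third step: multiplying by $\beta$ gives $\beta-c$. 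Its floor is $a-c$, because $a<\beta<a+1$, and this is a legitimate digit in $\{0,\dots,a\}$. The remainder is $T^3_\beta(1)=\beta-a=T_\beta(1)$.
\end{itemize}
The orbit of $1$ therefore closes up, giving
\[
d_\beta(1)=a\,\bigl((b-1)\,(a-c)\bigr)^\omega .
\]
The orbit $\{T_\beta^k(1)\}$ is finite, so $\beta$ is a Parry number.

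\textbf{Step 3: consistency check.} One can confirm the expansion with Theorem~\ref{thm:parry}. Every shift of the string begins with $b-1<a$ or with $a-c<a$ (using $c\ge1$), so every shift is lexicographically smaller than the string itself.

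The only delicate point is checking that every intermediate remainder lies in $[0,1)$ and every digit is in the right range. This is exactly where each hypothesis is used:
\begin{itemize}
\item $b\ge 1$ makes the digit $b-1$ nonnegative;
\item $c\le a<\beta$ keeps $1-c/\beta$ in $(0,1)$;
\item $b\le a-1$ forces $\beta<a+1$.
\end{itemize}
The boundary cases $b=1,c=a$ and $c=0$, where $p$ becomes reducible, are the ones that need separate treatment as in Step 1.
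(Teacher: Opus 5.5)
Your argument is correct. It arrives at the same expansion $d_\beta(1)=a\,\bigl((b-1)(a-c)\bigr)^\omega$ as the paper, but you certify greediness by a different route.

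The paper checks that $a/\beta+\sum_{i\ge1}\bigl((b-1)\beta^{-2i}+(a-c)\beta^{-2i-1}\bigr)=1$ using $p(\beta)=0$, and then invokes Parry's lexicographic criterion (Theorem~\ref{thm:parry}). The advantage of that route is that $\beta$ never has to be located. Parry's theorem supplies some base in which the string is the greedy expansion of $1$, and a string of nonnegative digits represents $1$ in at most one base $>1$.

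You instead run $T_\beta$ directly. This is self-contained, and its only cost is the localization $\lfloor\beta\rfloor=a$. It also has a real payoff: it exposes the boundary case $c=0$, where the remainder $T_\beta^2(1)=1-c/\beta$ would equal $1$. In that case the paper's claimed string is $(a\,(b-1))^\omega$. Its second shift equals the string itself, so the strict inequality in Theorem~\ref{thm:parry} fails, and the paper's argument does not apply as written. The true greedy expansion there is the finite one, $a\,b$, which is what you found.

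One small point should be tightened. The inequalities $p(a)<0<p(a+1)$ only give \emph{a} root in $(a,a+1)$. To know it is the Perron root, you should also note that $p$ has no root $\ge a+1$. For example, for $x\ge a+1>b$ we have $p(x)=x^2(x-a)-bx+c\ge x(x-b)+c>0$.
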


\begin{proof}
    We claim that $d_{\beta}(1) = a ((b-1) (a-c))^\omega$.  
    Using the fact that $\beta^3 = a \beta^2 +b \beta - c$, we can show that 
    \[1 = \frac{a}{\beta} + \sum_{i=1}^\infty \left(\frac{b-1}{\beta^{2i}}+\frac{a-c}{\beta^{2i+1}}\right) \]
    This proves that $a ((b-1) (a-c))^\omega$ is a value $\beta$-representation of $1$. 

    The fact that this $\beta$-representation is greedy follows from Theorem \ref{thm:parry}. 
\end{proof}

\begin{remark}
In \cite[Theorem 2]{Bassino02} Bassino provides the $\beta$-expansion of cubic Pisot numbers.
The expansion given in the second part of Case 2 of this Theorem did not need the hypothesis that the cubic algebraic integer was a Pisot number.  It is equivalent to that given above.
\end{remark}

\begin{proposition}
    For all $N$ there exists a non-Pisot Parry number $\beta$ such that $\OrdP(\beta) > N$.
\end{proposition}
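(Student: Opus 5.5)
The natural route is to produce, for each $N$, a cubic $\beta$ such that some odd power $\beta^n$ with $n>N$ again has minimal polynomial of the shape $x^3-Ax^2-Bx+C$ with $0\le C\le A$ and $1\le B\le A-1$. The preceding proposition then says $\beta^n$ is Parry, so $\OrdP(\beta)\ge n>N$ by Theorem \ref{Parryorder}, part \ref{part:Parryorder 1}.

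By Proposition \ref{prop:cubic} the conjugates must satisfy $-\beta<\gamma<-1<\alpha<1$, with $n$ odd. Write $A=\beta^n+\alpha^n+\gamma^n$ and $B=-(\beta^n\alpha^n+\beta^n\gamma^n+\alpha^n\gamma^n)$. For odd $n$ we then have $B\approx \beta^n(|\gamma|^n-\alpha^n)$ and $A\approx\beta^n$. So the key constraint $B\le A-1$ essentially asks that $|\gamma|^n-\alpha^n<1$. This forces both $|\gamma|$ and $\alpha$ to be within $O(1/n)$ of $1$. For this reason I would take a one-parameter family perturbing $(x-a)(x^2-1)$, namely
\[
f_a(x)=(x-a)(x^2-1)-x=x^3-ax^2-2x+a ,\qquad a\ge 3 .
\]
Linearising at $x=\pm1$ gives roots $\alpha\approx 1-\frac{1}{2(a-1)}$ and $\gamma\approx-1-\frac{1}{2(a+1)}$. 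Sign checks of $f_a$ confirm this: $f_a(1)=-1<0$, $f_a(0)=a>0$, $f_a(-1)=1>0$, and $f_a(-1-\frac1a)<0$ for large $a$. The Perron root is $\beta\approx a+\frac{2}{a}$.

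The steps are as follows. First, check that $f_a$ is irreducible, for instance via the rational root test, since $\pm1,\pm a$ and the other divisors of $a$ are not roots for suitable $a$. Second, note that $\beta$ is non-Pisot because $|\gamma|>1$. Third, for odd $n$, the number $\beta^n$ is a cubic Perron number by Lemma \ref{lem:perronpowers}. Its characteristic polynomial $x^3-Ax^2-Bx+C$ has integer coefficients given by Newton's identities, and $C=a^n$, since the product of the roots of $f_a$ is $-a$ and $n$ is odd.

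Next I would choose $n$ odd with $n\approx a/2$ (or $n\approx ca$ for a small fixed $c$). Then $|\gamma|^n\approx e^{n/(2a)}$ and $\alpha^n\approx e^{-n/(2a)}$, so $|\gamma|^n-\alpha^n\approx 2\sinh(n/2a)$, which is bounded away from $1$ from below. The three inequalities are then checked as follows.
\begin{itemize}
\item $B\ge 1$ holds since $B\approx\beta^n(|\gamma|^n-\alpha^n)$ is huge.
\item $B\le A-1$ holds since $A-B\approx\beta^n\bigl(1-(|\gamma|^n-\alpha^n)\bigr)$ is huge and positive.
\item $C\le A$ amounts to $a^n\le\beta^n+\alpha^n+\gamma^n$. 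This holds because $\beta^n-a^n\gtrsim a^n\cdot\frac{2n}{a^2}$ vastly exceeds $|\gamma|^n-\alpha^n=O(1)$.
\end{itemize}
Letting $a\to\infty$ gives $n\to\infty$, which proves the statement.

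The main obstacle is making these asymptotic estimates rigorous and uniform. One needs explicit two-sided bounds on $\alpha,\gamma,\beta$, for example $1-\frac{1}{a-2}<\alpha<1-\frac{1}{2a}$ and similarly for $\gamma$ and for $\beta-a$, obtained by evaluating $f_a$ at explicit points. These bounds must then be pushed through $n$-th powers to get clean inequalities, such as $|\gamma|^n-\alpha^n\le 2\sinh(1/2)+o(1)<1$ for $n\le a/2$. If the constants turn out to be too tight, I would shrink the ratio $n/a$ (any $n\le ca$ with $c$ small still tends to infinity), or adjust the perturbation term $-x$ to $-kx$ to tune $\alpha$ and $\gamma$.
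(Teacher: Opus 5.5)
Your proof is correct in substance and uses the same ingredients as the paper: the family $x^3-ax^2-2x+a$, odd exponents (forced by Proposition \ref{prop:cubic}), and the cubic criterion $0\le C\le A$, $1\le B\le A-1$ applied to the minimal polynomial of $\beta^n$. The real difference is how the exponent is scaled against the parameter. The paper takes the parameter to be $d^2$ and the exponent $n=d$ odd. Then $|\gamma|^n-\alpha^n\approx 2\sinh(\frac{1}{2d})\approx\frac1d\to0$ and $B\approx A/d$, so $B\le A-1$ holds with a margin growing in $d$ and crude first-order asymptotics suffice. The price is that the paper only gets $\OrdP(\beta_{d^2})\ge d$, i.e. square-root growth in the parameter. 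Your choice $n\asymp a$ gives the stronger linear bound $\OrdP(\beta_a)\gtrsim ca$, which is closer to the computed spectra of $\beta_d$ for $d\le100$. The cost is that the margin $1-(|\gamma|^n-\alpha^n)$ is only a fixed constant, so your root estimates must be sharp.

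On that point, your sample inequality is wrong as written: $2\sinh(1/2)=e^{1/2}-e^{-1/2}\approx1.04>1$. This is exactly what the crude bounds you list, $\alpha>1-\frac{1}{a-2}$ and $|\gamma|<1+\frac1a$, give at $n=a/2$, so they are not enough there. You have two ways to fix it. Either use the sharper estimates $1-\alpha=\frac{1}{2a}+O(a^{-2})$ and $|\gamma|-1=\frac{1}{2a}+O(a^{-2})$, which give $|\gamma|^n-\alpha^n\le2\sinh(1/4)+o(1)\approx0.51$ for $n\le a/2$. Or keep the crude bounds and restrict to $n\le a/4$, where they give at most $e^{1/4}-e^{-1/4}+o(1)$.

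Two smaller points. First, in fact $a<\beta<a+\frac1a$, because $f_a(a)=-a<0<a^{-3}=f_a(a+\frac1a)$; so $\beta\approx a+\frac2a$ is wrong, though this is harmless for $C\le A$. Second, the statement asks for a Parry number, so you should note that $\beta$ itself is Parry. This follows from the same criterion with $(A,B,C)=(a,2,a)$ and $a\ge3$.
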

\begin{proof}
    Let $\gamma(d), \alpha(d)$ and $\beta(d)$ be the root of $x^3-d x^2-2 x + d$ where
    $\gamma(d) < -1 < \alpha(d) < 1 < \beta(d)$.

    We can show that
    \begin{align}
    \gamma(d) & = -1-\frac{1}{2d} + \frac{3}{8 d^2} - \frac{55}{128 d^4} + \dots, \label{eq:1} \\
    \alpha(d) & = 1-\frac{1}{2d} - \frac{3}{8 d^2} + \frac{55}{128 d^4} + \dots, \label{eq:2} \\
    \beta(d) & = d + \frac{1}{d} - \frac{1}{d^5} + \frac{2}{d^7} - \frac{1}{d^9} + \dots.  \label{eq:3}
    \end{align}

    We observe from this that for sufficiently large $d$ that $\beta_d$ is not a Pisot number.  Hence it's Parry order is bounded for any fixed $d$.

    Letting $p_n(x) = (x-\beta(d)^n) (x-\alpha(d)^n) ( x- \gamma(d)^n) = x^3 - a_n(d) x^2 - b_n(d) x + c_n(d)$.

    Using equations \eqref{eq:1}, \eqref{eq:2} and \eqref{eq:3} we get 
    \begin{align*}
        a_n(d) & = 
            \gamma(d)^n + \beta(d)^n + \alpha(d)^n, \\
        b_n(d) & = -(\gamma(d)^n \beta(d)^n + \gamma(d)^n \alpha(d)^n + \alpha(d)^n \beta(d)^n). \\
    \end{align*}

    Let $d$ be odd, and consider $a_d(d^2)$ and $b_d(d^2)$. 
    This simplify to 
    \begin{align*}
        a_d(d^2) & = d^{2d} \left( 1 + \frac{1}{d^2} + \frac{1}{2 d^6} - \frac{1}{2 d^7} + \frac{1}{6 d^9} + \dots \right), \\
        b_d(d^2) & = d^{2d} \left(-\frac{1}{d} - \frac{1}{24 d^3} - \frac{1}{2 d^4} - \frac{881}{1920 d^5} - \frac{1}{48 d^6} - \dots \right).         
    \end{align*}
    In particular this implies that for large odd $d$ that we have $\OrdP(\beta_{d^2}) \geq d$.

This proves the desired result.
\end{proof}

\begin{example}
    Let $P_d(x) = x^3 - d x^2 - 2 x + d$ have Parry root $\beta_d > d$.
Below we have listed the Parry Spectrum for $1 \leq d \leq 100$. 
\[ \SpecP(\beta_d) = \begin{cases}
    \{1, 3, \dots, d\} & d \text{ odd and } 1 \leq d \leq 9 \\
    \{1, 3, \dots, d-2\} & d \text{ odd and } 11 \leq d \leq 63 \\
    \{1, 3, \dots, d-4\} & d \text{ odd and } 65 \leq d \leq 99 \\
    \{1, 3, \dots, d-1\} & d \text{ even and } 2 \leq d \leq 36 \\
    \{1, 3, \dots, d-3\} & d \text{ even and } 38 \leq d \leq 90 \\
    \{1, 3, \dots, d-5\} & d \text{ even and } 92 \leq d \leq 100
\end{cases}.\]
We note that the Parry Order is higher than what is given in Proposition \ref{prop:cubic}.
\end{example}


\subsection{Some further data}
\label{sec:ParryVsNonParry}

Let $Q_N^d$ be the set of Perron numbers that divide a degree $d$ polynomials $x^{d}+ a_{d-1} x^{d-1} + \dots a_0$ with $|a_i| \leq N$ with the additional property that the largest
    conjugate is bounded by $\frac{1+\sqrt{5}}{2}$.
We have listed in Table \ref{tab:Q} the size of $Q^{d}_N$ for various $d$ and $N$, 
    along with how many of these polynomials are Pisot, Parry, Salem, and the various 
    orders found.

\begin{table}
\begin{tabular}{lllllll}
$d$ & $N$ & $\# Q_{d}^N$ & $\# Q_{d}^N \cap \mathcal{P} $& $\# Q_{d}^N \cap \mathcal{S} $& $\# Q_{d}^N \cap \mathcal{T} $& $\# Q_{d}^N \cap \mathcal{H}_n $\\ \hline 
2& 50 &  3374&  2451&  2451&  0 & $\# \cdot \cap \mathcal{H}_{0} = 923$, $\# \cdot \cap \mathcal{H}_{\infty} = 2451$ \\ \hline 
3& 10 &  2499&  1395&  1185&  0 & $\# \cdot \cap \mathcal{H}_{0} = 1104$, $\# \cdot \cap \mathcal{H}_{1} = 181$, \\ 
 &&&&&& $\# \cdot \cap \mathcal{H}_{3} = 20$, $\# \cdot \cap \mathcal{H}_{5} = 5$, \\ 
 &&&&&& $\# \cdot \cap \mathcal{H}_{7} = 2$, $\# \cdot \cap \mathcal{H}_{9} = 2$, \\ 
 &&&&&& $\# \cdot \cap \mathcal{H}_{\infty} = 1185$ \\ \hline 
4& 5 &  3288&  1230&  745&  33 & $\# \cdot \cap \mathcal{H}_{0} = 2042$, $\# \cdot \cap \mathcal{H}_{1} = 329$, \\ 
 &&&&&& $\# \cdot \cap \mathcal{H}_{2} = 65$, $\# \cdot \cap \mathcal{H}_{3} = 21$, \\ 
 &&&&&& $\# \cdot \cap \mathcal{H}_{4} = 15$, $\# \cdot \cap \mathcal{H}_{5} = 12$, \\ 
 &&&&&& $\# \cdot \cap \mathcal{H}_{6} = 9$, $\# \cdot \cap \mathcal{H}_{7} = 2$, \\ 
 &&&&&& $\# \cdot \cap \mathcal{H}_{8} = 7$, $\# \cdot \cap \mathcal{H}_{9} = 1$, \\ 
 &&&&&& $\# \cdot \cap \mathcal{H}_{10} = 2$, $\# \cdot \cap \mathcal{H}_{12} = 1$, \\ 
 &&&&&& $\# \cdot \cap \mathcal{H}_{14} = 2$, $\# \cdot \cap \mathcal{H}_{16} = 1$, \\ 
 &&&&&& $\# \cdot \cap \mathcal{H}_{17} = 1$, $\# \cdot \cap \mathcal{H}_{\infty} = 778$ \\ \hline 
5& 3 &  3222&  869&  431&  0 & $\# \cdot \cap \mathcal{H}_{0} = 2327$, $\# \cdot \cap \mathcal{H}_{1} = 308$, \\ 
 &&&&&& $\# \cdot \cap \mathcal{H}_{2} = 70$, $\# \cdot \cap \mathcal{H}_{3} = 37$, \\ 
 &&&&&& $\# \cdot \cap \mathcal{H}_{4} = 13$, $\# \cdot \cap \mathcal{H}_{5} = 7$, \\ 
 &&&&&& $\# \cdot \cap \mathcal{H}_{6} = 7$, $\# \cdot \cap \mathcal{H}_{7} = 5$, \\ 
 &&&&&& $\# \cdot \cap \mathcal{H}_{8} = 4$, $\# \cdot \cap \mathcal{H}_{9} = 3$, \\ 
 &&&&&& $\# \cdot \cap \mathcal{H}_{10} = 2$, $\# \cdot \cap \mathcal{H}_{11} = 1$, \\ 
 &&&&&& $\# \cdot \cap \mathcal{H}_{12} = 2$, $\# \cdot \cap \mathcal{H}_{14} = 1$, \\ 
 &&&&&& $\# \cdot \cap \mathcal{H}_{15} = 1$, $\# \cdot \cap \mathcal{H}_{18} = 1$, \\ 
 &&&&&& $\# \cdot \cap \mathcal{H}_{21} = 1$, $\# \cdot \cap \mathcal{H}_{22} = 1$, \\ 
 &&&&&& $\# \cdot \cap \mathcal{H}_{\infty} = 431$ \\ \hline 
6& 2 &  2306&  472&  160&  18 & $\# \cdot \cap \mathcal{H}_{0} = 1823$, $\# \cdot \cap \mathcal{H}_{1} = 223$, \\ 
 &&&&&& $\# \cdot \cap \mathcal{H}_{2} = 39$, $\# \cdot \cap \mathcal{H}_{3} = 10$, \\ 
 &&&&&& $\# \cdot \cap \mathcal{H}_{4} = 9$, $\# \cdot \cap \mathcal{H}_{5} = 3$, \\ 
 &&&&&& $\# \cdot \cap \mathcal{H}_{6} = 8$, $\# \cdot \cap \mathcal{H}_{7} = 4$, \\ 
 &&&&&& $\# \cdot \cap \mathcal{H}_{8} = 4$, $\# \cdot \cap \mathcal{H}_{10} = 3$, \\ 
 &&&&&& $\# \cdot \cap \mathcal{H}_{12} = 2$, $\# \cdot \cap \mathcal{H}_{\infty} = 178$ \\ \hline 
\end{tabular}
\caption{Data for families of Perron numbers} \label{tab:Q}
\end{table}

 \section{ Conclusion }
\label{sec:4}

 Basing on some old known results and all the theoretical and experimental results of this paper, the dynamical behavior of powers of Perron numbers is as follows: Let $\beta $ be a Perron number then we have one of the following cases: 
\begin{itemize}
 \item If $\beta $ is a Pisot number or degree 4 Salem number then $\beta^n $ is a Parry number for all integers $n$.
 
 \item Basing on Boyd's model prediction, we expect that if $\beta $ is a Salem number of degree $\geq 8$, then $\beta^n$ will be Parry and non-Parry numbers for infinitely many integers $n$. 
 \item If $\beta $ belongs to $\mathbb{P}_\Phi$ but it is neither a Pisot nor Salem number then $\beta^n $ can alternate for finitely many cases (for all integers $ 1\leq n \leq \OrdP(\beta) $) between the sets Parry and non-Parry numbers. It will be a non-Parry number for all integers $n >\OrdP(\beta)$.

 \item If $\beta \in \overline{\mathbb{P}}_\phi$, then $\beta^n$ is non-Parry number for all integers $n\geq 1$. 

\end{itemize}
 
Denote by $D_n=\{z \in \mathbb{C}, ~~ \mathrm{ such ~~that} ~~|z|< \sqrt[n]{\theta_0}\}$ and by $D_\infty=\mathbb{D}$ the unit disk. 
 Then, according to Solomyak criteria and Lemma \ref{lem:perronpowers}, it is clear that $\OrdP(\beta)=n$ implies the the set of Galois conjugates of $\beta $, other than $\beta$, is contained in the disc $D_n=\{z \in \mathbb{C}, ~~ \mathrm{ such ~~that} ~~|z|< \sqrt[n]{\theta_0}\}$. In particular, when $n \to \infty$, we have $\lim_{n \to \infty}(D_n) = D_\infty = \mathbb{D}$ and then the Galois conjugates of $\beta$ approach the unit disc. 

It is interesting to note in Table \ref{tab:Q} that the 
     proportions of $\mathcal{H}_0, $ increase but those     
    of $\mathcal{H}_1, $ and $\mathcal{H}_\infty, $ decrease when $d$ increase.
It would be interesting to know if this is a general phenomenon or not.

 Finally, according to Section \ref{sec:1}, the research concerning the small Salem number has been been a major focus of various research programs. This question is equivalent to the research concerning small Perron number $\beta$ satisfying $\OrdP(\beta)=\infty$. 
 In all cases, we hope that this result provides a new approach to the out longstanding problem of research of smallest Salem number.

\end{document}